\newtheorem{theorem}{Theorem}[section]
\newtheorem{lemma}[theorem]{Lemma}
\newtheorem{proposition}[theorem]{Proposition}
\newtheorem{corollary}[theorem]{Corollary}
\theoremstyle{definition}
\newtheorem{definition}[theorem]{Definition}
\theoremstyle{remark}
\newtheorem{remark}[theorem]{Remark}
\newcommand{\be}{\begin{equation}}
\newcommand{\ee}{\end{equation}}
\newcommand{\Om}{\Omega}
\newcommand{\om}{\omega}
\newcommand{\la}{\lambda}
\newcommand{\dz}{\wedge}
\newcommand{\ba}{\begin{array}}
\newcommand{\ea}{\end{array}}
\newcommand{\beq}{\begin{eqnarray}}
\newcommand{\eeq}{\end{eqnarray}}
\newtheorem{lm}{lemma}
\newtheorem{thee}{theorem}
\newtheorem{proo}{proposition}
\newtheorem{co}{corollary}
\newtheorem{rem}{remark}
\newtheorem{deff}{definition}
\newcommand{\bd}{\begin{deff}}
\newcommand{\ed}{\end{deff}}
\newcommand{\bl}{\begin{lm}}
\newcommand{\el}{\end{lm}}
\newcommand{\bp}{\begin{proo}}
\newcommand{\ep}{\end{proo}}
\newcommand{\bt}{\begin{thee}}
\newcommand{\et}{\end{thee}}
\newcommand{\bc}{\begin{co}}
\newcommand{\ec}{\end{co}}
\newcommand{\brm}{\begin{rem}}
\newcommand{\erm}{\end{rem}}
\newcommand{\der}{{\rm d}}
\def\frak{\mathfrak}
\newcommand{\newc}{\newcommand}
\let\ccdot\cdot
\def\cdot{\hbox to 2.5pt{\hss$\ccdot$\hss}}
\newc{\aR}{\mbox{\boldmath{$ R$}}}
\newc{\aS}{\mbox{\boldmath{$ S$}}}
\newc{\aT}{\mbox{\boldmath{$ T$}}}
\newc{\aW}{\mbox{\boldmath{$ W$}}}
\newc{\aK}{\mbox{\boldmath{$ K$}}}
\newc{\aL}{\mbox{\boldmath{$ L$}}}
\newcommand{\hook}{\raisebox{-0.35ex}{\makebox[0.6em][r]
{\scriptsize $-$}}\hspace{-0.15em}\raisebox{0.25ex}{\makebox[0.4em][l]{\tiny
 $|$}}}
\newcommand{\bma}{\begin{pmatrix}}
\newcommand{\ema}{\end{pmatrix}}
\newc{\obstrn}[2]{B^{#1}_{#2}}
\newcommand{\rpl}                         
{\mbox{$
\begin{picture}(12.7,8)(-.5,-1)
\put(0,0.2){$+$}
\put(4.2,2.8){\oval(8,8)[r]}
\end{picture}$}}
\newcommand{\lpl}                         
{\mbox{$
\begin{picture}(12.7,8)(-.5,-1)
\put(2,0.2){$+$}
\put(6.2,2.8){\oval(8,8)[l]}
\end{picture}$}}
\newc{\tensor}[1]{#1}
\newc{\Mvariable}[1]{\mbox{#1}}
\newc{\down}[1]{{}_{#1}}
\newc{\up}[1]{{}^{#1}}
\newc{\JulyStrut}{\rule{0mm}{6mm}}
\newc{\midtenPan}{\mbox{\sf S}}
\newc{\midten}{\mbox{\sf T}}
\newc{\midtenEi}{\mbox{\sf U}}
\newc{\ATen}{\mbox{\sf E}}
\newc{\BTen}{\mbox{\sf F}}
\newc{\CTen}{\mbox{\sf G}}
\def\sideremark#1{\ifvmode\leavevmode\fi\vadjust{\vbox to0pt{\vss
 \hbox to 0pt{\hskip\hsize\hskip1em
 \vbox{\hsize3cm\tiny\raggedright\pretolerance10000
 \noindent #1\hfill}\hss}\vbox to8pt{\vfil}\vss}}}%
\newcommand{\bgw}{{\textstyle \bigwedge}}
\numberwithin{equation}{section}
\newcounter{romenumi}
\newcommand{\labelromenumi}{(\roman{romenumi})}
\newcommand{\bbT}{\mathbb{T}}
\begin{document}
\title{Twistor space for rolling bodies}
\dedicatory{Dedicated to Mike Eastwood on the occasion of his 60th birthday}
\vskip 1.truecm
\author{Daniel An} \address{SUNY Maritime College 6 Pennyfield Avenue, Throggs Neck, New York 10465}
\email{dan@sunymaritime.edu}
\author{Pawe\l~ Nurowski} \address{Centrum Fizyki Teoretycznej,
Polska Akademia Nauk, Al. Lotnik\'ow 32/46, 02-668 Warszawa, Poland}
\email{nurowski@cft.edu.pl}
\thanks{This research was partially supported by the Polish Ministry of
  Research and Higher Education under grants NN201 607540 and NN202
  104838}

\date{\today}
\begin{abstract}
On a natural circle bundle $\bbT(M)$ over a 4-dimensional manifold $M$ equipped with a split signature metric $g$, whose fibers are real totally null selfdual 2-planes, we consider a tautological rank 2 distribution $\mathcal D$ obtained by lifting each totally null plane horizontally to its point in the fiber. Over the open set where $g$ is not antiselfdual, the distribution $\mathcal D$ is (2,3,5) in $\bbT (M)$. We show that if $M$ is a Cartesian product of two Riemann surfaces $(\Sigma_1,g_1)$ and $(\Sigma_2,g_2)$, and if $g=g_1\oplus(- g_2)$, then the circle bundle $\bbT(\Sigma_1\times\Sigma_2)$ is just the configuration space for the physical system of two surfaces $\Sigma_1$ and $\Sigma_2$ rolling on each other. The condition for the two surfaces to roll on each other `without slipping or twisting' identifies the restricted velocity space for such a system with the tautological distribution $\mathcal D$ on $\bbT(\Sigma_1\times\Sigma_2)$. We call $\bbT(\Sigma_1\times\Sigma_2)$ the \emph{twistor space}, and $\mathcal D$ the \emph{twistor distribution} for the rolling surfaces. Among others we address the following question: "For which pairs of surfaces does the restricted velocity distribution (which we identify with the twistor distribution $\mathcal D$) have the simple Lie group $G_2$ as the group of its symmetries?" Apart from the well known situation when the surfaces $\Sigma_1$ and $\Sigma_2$ have constant curvatures whose ratio is 1:9, we unexpectedly find \emph{three} different types of surfaces that when rolling `without slipping or twisting' on a \emph{plane}, have $\mathcal D$ with the symmetry group $G_2$. Although we have found the differential equations for the curvatures of $\Sigma_1$ and $\Sigma_2$ that gives $\mathcal D$ with $G_2$ symmetry, we are unable to solve them in full generality so far.
\end{abstract}
\maketitle
\vspace{-1truecm}
\tableofcontents
\newcommand{\bbS}{\mathbb{S}}
\newcommand{\bbR}{\mathbb{R}}
\newcommand{\sog}{\mathbf{SO}}
\newcommand{\slg}{\mathbf{SL}}
\newcommand{\og}{\mathbf{O}}
\newcommand{\soa}{\frak{so}}
\newcommand{\sla}{\frak{sl}}
\newcommand{\sua}{\frak{su}}
\newcommand{\dr}{\mathrm{d}}
\newcommand{\sug}{\mathbf{SU}}
\newcommand{\gat}{\tilde{\gamma}}
\newcommand{\Gat}{\tilde{\Gamma}}
\newcommand{\thet}{\tilde{\theta}}
\newcommand{\Thet}{\tilde{T}}
\newcommand{\rt}{\tilde{r}}
\newcommand{\st}{\sqrt{3}}
\newcommand{\kat}{\tilde{\kappa}}
\newcommand{\kz}{{K^{{~}^{\hskip-3.1mm\circ}}}}
\newcommand{\bv}{{\bf v}}
\newcommand{\di}{{\rm div}}
\newcommand{\curl}{{\rm curl}}
\newcommand{\cs}{(M,{\rm T}^{1,0})}
\newcommand{\tn}{{\mathcal N}}
\section{Introduction}
Bryant and Hsu \cite{brya}, pp. 456-458, gave the following description 
of the configuration space of two solids rolling on each other `without slipping or twisting': 

The two solids are represented by two surfaces $\Sigma_1$ and $\Sigma_2$, equipped with the respective Riemannian metrics $g_1$ and $g_2$. The configuration space for the physical system is parmetrized by points $x$ on the first surface, points $\hat{x}$ on the second surface (these are just the points of contact of the two surfaces), and a rotation $A$ identifying the tangent space to $\Sigma_1$ at $x$ with the tangent space to $\Sigma_2$ at $\hat{x}$. This makes the configuration space a circle fiber bundle $\bbS^1\hookrightarrow C(\Sigma_1,\Sigma_2)\to \Sigma_1\times\Sigma_2$ over the Cartesian product $\Sigma_1\times\Sigma_2$ of the two surfaces,
$$C(\Sigma_1,\Sigma_2)=\{(x,\hat{x},A)~|~A:{\rm T}_x\Sigma_1\to{\rm T}_{\hat{x}}\Sigma_2,~A\in\sog(2)\cong\bbS^1\},$$
with the projection $\pi(x,\hat{x},A)=(x,\hat{x})$. 

In this realization of the configuration space, the movement of the two surfaces is represented by curves $\gamma(t)=(x(t),\hat{x}(t),A(t))$ in $C(\Sigma_1,\Sigma_2)$. The unconstrained velocity space at a point $p$ consists of all vectors of the form $\dot{\gamma}(t)_{|t=0}=(\dot{x}(t),\dot{\hat{x}}(t),$ $\dot{A}(t))_{|t=0}$, where $\gamma(t)$ stands for all smooth curves in $C(\Sigma_1,\Sigma_2)$ such that $\gamma(0)=p$.  

The `no slipping and no twisting' conditions constrain the velocity space, reducing its dimension at each point from five to two. This reduction is obtained by first imposing a condition for the absence of `linear slipping'. This can be formalized as follows. If $\gamma(t)=(x(t),\hat{x}(t),A(t))$ is an admissible motion, then the lack of linear slipping means that:
$$A(t)\dot{x}(t)=\dot{\hat{x}}(t).$$
This produces a drop in the dimension of the velocity space at each point by two, from five to three. The condition of no `twisting' reduces this dimension to two. We impose it now. It means that the admissible motions $\gamma(t)=(x(t),\hat{x}(t),A(t))$ must have the following geometric property: for every vector field $v(t)$ which is parallel along $x(t)$ the $A(t)$ transformed vector field  $\hat{v}(t)$ must be a vector field parallel along $\hat{x}(t)$, i.e.
$$\stackrel{1}{\nabla}_{x(t)} v(t)=0 ~~{\rm and } ~~A(t)v(t)=\hat{v}(t) ~~{\rm implies} ~~\stackrel{2}{\nabla}_{\hat{x}(t)} \hat{v}(t)=0,$$   
 where $\stackrel{i}{\nabla}$ is the Levi-Civita connection for the surface $(\Sigma_i, g_i )$.
 
To be more explicit, we now follow \cite{agr}. We take $(e_1(x),e_2(x))$ as an orthonormal frame in $\Sigma_1$ and $(e_3(\hat{x}),e_4(\hat{x}))$ as an orthonormal frame in $\Sigma_2$. To simplify the notation, from now on we will omit the dependencies of $x$ and $\hat{x}$ in the expressions involving these basis vectors. 

The most general forms of the commutators for $(e_1,e_2)$ and $(e_3,e_4)$ are:
\be
[e_1,e_2]=a_1e_1+a_2e_2,\quad\quad [e_3,e_4]=a_3 e_3+a_4 e_4,\label{fr}\ee
with $a_1=a_1(x), a_2=a_2(x)$ functions on $\Sigma_1$ and $a_3=a_3(\hat{x}), a_4=a_4(\hat{x})$ functions on $\Sigma_2$. We extend the coframes $(e_1,e_2)$ and $(e_3,e_4)$ to a coframe $(e_1,e_2,e_3,e_4)$ on $\Sigma_1\times\Sigma_2$. This is done by requiring that the extended frame $(e_1,e_2,e_3,e_4)$ satisfies (\ref{fr}), with the functions $a_1, a_2$ being constant along $e_3$ and $e_4$, and the functions $a_3$ and $a_4$ being constant along $e_1$ and $e_2$. The next requirement, that uniquely defines the extension, is that all commutators of $(e_1,e_2,e_3,e_4)$ other than those given by the relations (\ref{fr}) vanish on $\Sigma_1\times\Sigma_2$ . Parametrizing the rotation matrices $A$ by the angle of rotation $\phi$, 
$$A_\phi=\bma \cos \phi &-\sin \phi \\\sin \phi &\cos \phi \ema,$$
we further extend the frame $(e_1,e_2,e_3,e_4)$ from $\Sigma_1\times\Sigma_2$ to $C(\Sigma_1,\Sigma_2)$ by the requirement that the resulting vector fields $(e_1,e_2,e_3,e_4)$ on $C(\Sigma_1,\Sigma_2)$ are constant when Lie dragged along the fibers:
$${\mathcal L}_{\partial_\phi}e_i\equiv 0,\quad\quad i=1,2,3,4.$$
This defines a coframe $(e_1,e_2,e_3,e_4,\partial_\phi)$ in $C(\Sigma_1,\Sigma_2)$. Now, it follows from \cite{agr} that the velocity space of admissible motions constrained by the `no slipping and no twisting' conditions is, at every point, spanned by:
\be
\begin{aligned}
&\tilde{X}_1=e_1+\cos \phi e_3+\sin \phi e_4+(-a_1+a_3\cos \phi +a_4\sin \phi )\partial_\phi\\
&\tilde{X}_2=e_2-\sin \phi e_3+\cos \phi e_4+(-a_2-a_3\sin \phi +a_4\cos \phi )\partial_\phi.
\end{aligned}\label{dist}
\ee 
We summarize the above considerations in the following proposition.
\begin{proposition}
The configuration space for the physical system of two surfaces rolling on each other `without slipping or twisting' is a circle bundle 
$\bbS^1\hookrightarrow C(\Sigma_1,\Sigma_2)$ $\to \Sigma_1\times\Sigma_2.$
The space of admissible velocities for the system is a 2-dimensional distribution ${\mathcal D}_v$ in $C(\Sigma_1,\Sigma_2)$. In coordinates $(x,\hat{x},\phi)$ on $C(\Sigma_1,\Sigma_2)$, where $x$ and $\hat{x}$ denote the respective points on $\Sigma_1$ and $\Sigma_2$, and where $\phi$ is the angle of rotation corresponding to the map $A_\phi$, the distribution ${\mathcal D}_v$ is spanned by the vector fields $\tilde{X}_1$ and $\tilde{X}_2$ given by (\ref{dist}). 
\end{proposition}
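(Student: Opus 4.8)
The ingredients of this proposition have essentially been assembled in the discussion above; the plan is to organize them into a proof by treating the bundle structure and the velocity distribution separately.

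For the bundle structure: by its very definition $C(\Sigma_1,\Sigma_2)$ is the set of triples $(x,\hat x,A)$ with $A$ a linear isometry ${\rm T}_x\Sigma_1\to{\rm T}_{\hat x}\Sigma_2$. Over a patch on which orthonormal frames $(e_1,e_2)$ on $\Sigma_1$ and $(e_3,e_4)$ on $\Sigma_2$ are fixed, each such $A$ is represented by a rotation matrix $A_\phi=\bma\cos\phi&-\sin\phi\\\sin\phi&\cos\phi\ema\in\sog(2)$, so the fiber over a point of $\Sigma_1\times\Sigma_2$ is a copy of $\sog(2)\cong\bbS^1$ with fiber coordinate $\phi$; the projection $(x,\hat x,A)\mapsto(x,\hat x)$ then exhibits $C(\Sigma_1,\Sigma_2)$ as a locally trivial $\bbS^1$-bundle over $\Sigma_1\times\Sigma_2$, with smooth transition functions (changes of orthonormal frame act by rotations), and with $\partial_\phi$ spanning the fiber direction.

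For the velocity distribution: I would first observe that a smooth motion is a curve $\gamma(t)=(x(t),\hat x(t),\phi(t))$ whose velocity, in the frame, is $\dot\gamma=\dot x^1e_1+\dot x^2e_2+\dot{\hat x}^3e_3+\dot{\hat x}^4e_4+\dot\phi\,\partial_\phi$; as $\gamma$ ranges over all curves through a point these fill the whole $5$-dimensional tangent space. Next I would write out the no-linear-slipping condition $A_\phi\dot x=\dot{\hat x}$ using $A_\phi e_1=\cos\phi\,e_3+\sin\phi\,e_4$ and $A_\phi e_2=-\sin\phi\,e_3+\cos\phi\,e_4$, getting the two independent linear relations $\dot{\hat x}^3=\dot x^1\cos\phi-\dot x^2\sin\phi$ and $\dot{\hat x}^4=\dot x^1\sin\phi+\dot x^2\cos\phi$, which cut the velocity space down to the $3$-dimensional set parametrized by $(\dot x^1,\dot x^2,\dot\phi)$. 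For the no-twisting condition I would read the Levi-Civita connection $1$-forms off \nn{fr}: writing $e^1,e^2$ for the coframe dual to $e_1,e_2$, from $\dr e^1=-a_1\,e^1\wedge e^2$, $\dr e^2=-a_2\,e^1\wedge e^2$ and the torsion-free first structure equation one gets the connection form $\omega_1=a_1e^1+a_2e^2$ on $\Sigma_1$, and similarly $\omega_2=a_3e^3+a_4e^4$ on $\Sigma_2$, so that a field $v$ parallel along $x(t)$ rotates in the frame $(e_1,e_2)$ with angular velocity $\omega_1(\dot x)$, and likewise on $\Sigma_2$. Since $A_{\phi(t)}$ is rotation by $\phi(t)$, the frame-angle of $\hat v=A_\phi v$ equals that of $v$ plus $\phi$, so the requirement that $A(t)$ carry every parallel field to a parallel field is exactly $\dot\phi=\omega_2(\dot{\hat x})-\omega_1(\dot x)$. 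Substituting the no-slipping expressions for $\dot{\hat x}^3,\dot{\hat x}^4$ turns this into $\dot\phi=(-a_1+a_3\cos\phi+a_4\sin\phi)\,\dot x^1+(-a_2-a_3\sin\phi+a_4\cos\phi)\,\dot x^2$, a linear function of $(\dot x^1,\dot x^2)$; hence the constrained velocity space is the $2$-plane swept out as $(\dot x^1,\dot x^2)$ varies, and setting $(\dot x^1,\dot x^2)=(1,0)$ and $(0,1)$ produces exactly the vector fields $\tilde{X}_1$, $\tilde{X}_2$ of \nn{dist}, which are visibly independent. This is the content of the result of \cite{agr} quoted above.

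The main obstacle is purely one of bookkeeping: one must fix, once and for all, the handedness of $A_\phi$, the sense in which $\phi$ increases, a sign convention in the first structure equation, and the identification $A_\phi e_1=\cos\phi\,e_3+\sin\phi\,e_4$, and then carry these consistently through the computation of $\omega_1,\omega_2$ and of the parallel-transport matching, so that the connection contributions assemble into precisely the $\partial_\phi$-coefficients of \nn{dist} rather than some sign variant. With the conventions above the computation is short, and the most economical way to complete the argument is to verify by direct substitution that $\tilde{X}_1,\tilde{X}_2$ of \nn{dist} satisfy both $A_\phi\dot x=\dot{\hat x}$ and the parallel-transport matching $\dot\phi=\omega_2(\dot{\hat x})-\omega_1(\dot x)$.
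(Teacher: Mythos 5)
Your proposal is correct, and it follows the same conceptual route as the paper: the paper does not give a formal proof of this proposition at all, but rather presents it as a summary of the preceding discussion, with the bundle structure taken from Bryant--Hsu and the explicit formula \nn{dist} simply quoted from \cite{agr} (``it follows from \cite{agr} that\dots''). What you add is the derivation that the paper delegates to that reference: you make explicit that no-slipping imposes two linear relations $\dot{\hat x}{}^3=\dot x^1\cos\phi-\dot x^2\sin\phi$, $\dot{\hat x}{}^4=\dot x^1\sin\phi+\dot x^2\cos\phi$, and that no-twisting, because everything in two dimensions is a rotation and $A_\phi$ commutes with the rotation generator, collapses to the single scalar relation $\dot\phi=\omega_2(\dot{\hat x})-\omega_1(\dot x)$; substituting $(\dot x^1,\dot x^2)=(1,0)$ and $(0,1)$ does reproduce the $\partial_\phi$-coefficients $-a_1+a_3\cos\phi+a_4\sin\phi$ and $-a_2-a_3\sin\phi+a_4\cos\phi$ of \nn{dist} exactly. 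Your closing remark that the sign conventions should be fixed and the final formulas checked by direct substitution is the right way to close the one genuinely delicate point; with the conventions you state, the check goes through.
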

 
\begin{remark}\label{homo}
Note that if we simultaneously rescale the metric of our two Riemann surfaces by the same constant, i.e. $(g_1,g_2)\to (s^2 g_1,s^2 g_2)$ with $s={\rm const}\neq 0$, then $e_i\to s^{-1}e_i$ and $a_i\to s^{-1}a_i$, $i=1,2,3,4$. This transformation merely rescales the vector fields $\hat{X}_1$, $\hat{X}_2$ as $\hat{X}_1\to s^{-1}\hat{X}_1$ and $\hat{X}_2\to s^{-1}\hat{X}_2$. Thus the distribution ${\mathcal D}_v$ does not change when the two rolling surfaces are scaled by the same constant factor. This reflects an obvious fact that the local symmetry of two surfaces rolling on each other `without slipping or twisting' should only depend on their relative size respect to one another.   
\end{remark}

We now present a simple observation that is crucial for the rest of the paper:

\begin{proposition}\label{mike}
Every point of the configuration space $C(\Sigma_1,\Sigma_2)$ of the system of two rolling surfaces `without slipping or twisting' defines a 2-plane, which is totally null in the standard split signature metric in $\bbR^4=\bbR^{(2,2)}$.
\end{proposition}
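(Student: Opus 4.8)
The plan is to write the $2$-plane down explicitly and check nullity by a one-line computation. Fix a point $p=(x,\hat x,A)\in C(\Sigma_1,\Sigma_2)$ and keep the oriented orthonormal frames $(e_1,e_2)$ of $(T_x\Sigma_1,g_1)$ and $(e_3,e_4)$ of $(T_{\hat x}\Sigma_2,g_2)$ used above. These frames identify the $4$-dimensional vector space $V:=T_x\Sigma_1\oplus T_{\hat x}\Sigma_2$ with $\bbR^4$, and under this identification the symmetric form $g:=g_1\oplus(-g_2)$ --- which satisfies $g(e_1,e_1)=g(e_2,e_2)=1=-g(e_3,e_3)=-g(e_4,e_4)$ and $g(e_i,e_j)=0$ for $i\neq j$ --- is the standard split-signature metric, so that $V\cong\bbR^{(2,2)}$. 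To the rotation $A=A_\phi$ I attach its graph
\be
\Pi_p:=\{\,v+A_\phi v\ :\ v\in T_x\Sigma_1\,\}\ \subset\ V ,
\ee
a $2$-plane spanned by $X_1=e_1+\cos\phi\, e_3+\sin\phi\, e_4$ and $X_2=e_2-\sin\phi\, e_3+\cos\phi\, e_4$; these are exactly the components along $\Sigma_1\times\Sigma_2$ of the admissible velocity fields $\tilde{X}_1,\tilde{X}_2$ of \eqref{dist}.

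The heart of the matter is that $\Pi_p$ is totally null, and this is forced by the single fact that $A_\phi\in\sog(2)$ is a linear isometry of $(T_x\Sigma_1,g_1)$ onto $(T_{\hat x}\Sigma_2,g_2)$: for all $v,w\in T_x\Sigma_1$,
\be
g(v+A_\phi v,\ w+A_\phi w)=g_1(v,w)-g_2(A_\phi v,A_\phi w)=g_1(v,w)-g_1(v,w)=0 ,
\ee
the cross terms dropping out because $T_x\Sigma_1$ and $T_{\hat x}\Sigma_2$ are $g$-orthogonal. (Equivalently, componentwise: $g(X_1,X_1)=1-\cos^2\phi-\sin^2\phi=0$, $g(X_2,X_2)=1-\sin^2\phi-\cos^2\phi=0$, $g(X_1,X_2)=0$.) Since $2$ is the maximal dimension of a totally null subspace of $\bbR^{(2,2)}$, the assignment $p\mapsto\Pi_p$ has the claimed property.

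Two remarks worth recording, although not strictly needed for the statement. First, $\Pi_p$ is in fact \emph{self-dual}: with the orientation $e_1\wedge e_2\wedge e_3\wedge e_4$ one expands the decomposable bivector $X_1\wedge X_2$ and checks that the associated $2$-form lies in the $(+1)$-eigenspace of the Hodge star. This is no accident --- a real totally null $2$-plane in split signature is always the graph of a linear isometry between the positive- and negative-definite parts of $V$, and it is self-dual exactly when that isometry preserves orientation, which $A_\phi\in\sog(2)$ does by construction. Second, this identifies $p\mapsto\Pi_p$ with a bundle morphism $C(\Sigma_1,\Sigma_2)\to\bbT(\Sigma_1\times\Sigma_2)$, and explains why the tautological twistor distribution $\mathcal D$ of the abstract restricts over $\Sigma_1\times\Sigma_2$ to the base parts of $\tilde{X}_1,\tilde{X}_2$. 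There is no genuine obstacle in the argument; the only point demanding care is the bookkeeping of signs and of the orientation convention, so that $g_1\oplus(-g_2)$ really has signature $(2,2)$ and so that ``self-dual'' comes out consistent with the construction of $\bbT(M)$ used elsewhere in the paper.
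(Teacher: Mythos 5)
Your proof is correct and follows essentially the same route as the paper: represent the point of the configuration space by the graph of $A_\phi$ inside $T_x\Sigma_1\oplus T_{\hat x}\Sigma_2\cong\bbR^{(2,2)}$, spanned by $X_1=e_1+\cos\phi\,e_3+\sin\phi\,e_4$ and $X_2=e_2-\sin\phi\,e_3+\cos\phi\,e_4$, and deduce total nullity from the orthogonality (isometry property) of $A_\phi$. Your extra computation $g(v+A_\phi v,w+A_\phi w)=g_1(v,w)-g_2(A_\phi v,A_\phi w)=0$ just makes explicit the step the paper summarizes as ``due to the orthogonality of $A$,'' and the remarks on self-duality anticipate material the paper treats separately.
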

\begin{proof}
Given a point $(x,\hat{x},\phi)$ in $C(\Sigma_1,\Sigma_2)$ we consider the graph
$$\{(a,b,a\cos\phi-b\sin\phi,a\sin\phi+b\cos\phi)~|~a,b\in \bbR^2\}\subset\bbR^4,$$
of the map $A_\phi:{\rm T}_x\Sigma_1\to{\rm T}_{\hat{x}}\Sigma_2$. This gives a plane $$N(x,\hat{x},\phi)={\rm Span}(X_1,X_2)$$
in $\bbR^4$ spanned by the vectors $$X_1=(1,0,\cos\phi,\sin\phi)\quad {\rm and}\quad X_2=(0,1,-\sin\phi,\cos\phi).$$
Due to to the orthogonality of $A$, the plane $N(x,\hat{x},\phi)$ is totally null in the standard split signature metric $y_1^2+y_2^2-y_3^2-y_4^2$ in $\bbR^4=\bbR^{(2,2)}$. 
\end{proof}
This proposition suggests that we consider the space $\bbT(M)$ of real totally null planes over a 4-dimensional manifold $M=\Sigma_1\times\Sigma_2$ equipped with the metric $g=g_1\oplus (-g_2)$ and identify the points of the configuration space $C(\Sigma_1,\Sigma_2)$ for the two rolling surfaces with the points of $\bbT(M)$. To make this suggestion into a precise identification we now discuss the geometry of the space $\bbT(M)$. Because of possible applications other than the kinematics of the rolling surfaces, we will consider $\bbT(M)$ over \emph{general} split signature metric 4-manifolds $M$, \emph{not} assuming from the very beginning that $M$ is a product of two surfaces.
\section{Twistor space}
\subsection{Null planes in $\bbR^{(2,2)}$}
Consider the 4-dimensional vector space $V=\bbR^4$. Denote by $(e_1,e_2,e_3,e_4)$ the standard basis in it, $e_1=(1,0,0,0)$, $e_2=(0,1,0,0)$, $e_3=(0,0,1,0)$ and $e_4=(0,0,0,1)$. Then every vector $y\in V$ is $y=y_1e_1+y_2e_2+y_3e_3+y_4 e_4$. 

We now endow $V$ with the standard split signature metric $g$, by setting $g(y,y)=y_1^2+y_2^2-y_3^2-y_4^2$ for each $y\in V$. We also choose an orientation in $V$. This additionally equips $V$ with the Hodge star operator $*$ which, in particular, is an automorphism of the space $\bgw^2V$ of bivectors. In the basis $e_i$ this automorphism is given by 
\be
\begin{aligned}
&*(e_1\dz e_2)=e_3\dz e_4,\quad &*(e_3\dz e_4)=e_1\dz e_2\\
&*(e_1\dz e_3)=e_2\dz e_4,\quad &*(e_2\dz e_4)=e_1\dz e_3\\
&*(e_1\dz e_4)=-e_2\dz e_3,\quad &*(e_2\dz e_3)=-e_1\dz e_4.
\end{aligned}\label{ob}
\ee
One easily checks that the map $*:\bgw^2 V\to \bgw^2 V$ squares to the identity, $*^2={\rm Id}$. It has two eigenvalues $+1$ and $-1$, and splits $\bgw^2 V$ onto a direct sum of the corresponding eigenspaces $\bgw^2V=V_+\oplus V_-$. 
Bivectors from $V_+$ are called selfdual, and bivectors from $V_-$ are called antiselfdual.

In $V$ we have two kinds of real totally null planes. An example of the planes of the first kind is 
\be
N_+={\rm Span}(e_1+e_3,e_2+e_4)\label{n+}\ee and an example of the planes of the second kind is 
\be N_-={\rm Span}(e_1+e_3,e_2-e_4).\label{n-}\ee The difference between them is clearly visible in terms of their corresponding bivectors: 

Let $N={\rm Span}(n_1,n_2)$ be a general real totally null plane in $V$. This means that $n_1,n_2\in V$, $g(n_1,n_1)=g(n_1,n_2)=g(n_2,n_2)=0$ and $n_1\dz n_2\neq 0$. Every such $N$ defines a line $\bbR L(N)$ in $\bgw^2V$ represented by $L(N)=n_1\dz n_2$. One can show that the condition that $N$ is totally null forces $L(N)$ to be an eigenvector of $*$. Thus $L(N)$ is either selfdual or antiselfdual, and we use this property of $L(N)$ to call the corresponding $N$ selfdual, or antiselfdual respectively. In this sense, our $N_+$ above is selfdual, and $N_-$ is antiselfdual.

The identity component $\sog_0(2,2)$ of the orthogonal group $\sog(2,2)$ acts on totally null planes via:
$$hN={\rm Span}(hn_1,hn_2), \quad{\rm where}\quad h\in \sog_0(2,2),\quad N={\rm Span}(n_1,n_2),$$
where $hn_1$ denotes the usual action of $\sog_0(2,2)$ on the vector $n_1$ in $\bbR^4=\bbR^{(2,2)}$. 

This action has two orbits ${\mathcal O}_+$ and ${\mathcal O}_-$ given by:
$${\mathcal O}_\pm=\{hN_\pm~|~h\in\sog_0(2,2)\},$$
where $N_\pm$ is given by (\ref{n+}) and (\ref{n-}), respectively. Thus each orbit consists of all the totally null planes of a given selfduality. Both of them are diffeomorphic to a circle $\bbS^1$. We summarize considerations of this section in the following (well known) proposition.
\begin{proposition}
The space $\mathcal O$ of totally null planes in $V=\bbR^4$ equipped with the split signature metric is a disjoint union, ${\mathcal O}={\mathcal O}_+\bigsqcup{\mathcal O}_-$ of the spaces ${\mathcal O}_\pm$ of respectively selfdual and antiselfdual totally null planes. Each of the spaces ${\mathcal O}_\pm$ is diffeomorphic to a circle, ${\mathcal O}_\pm\cong \bbS^1$. In the orthonormal basis (\ref{ob}) the orbit ${\mathcal O}_+$ may be parametrized by $\phi\in [0,2\pi[$, so that $N_+(\phi)\in{\mathcal O}_+$ iff
$$
N_+(\phi)={\rm Span}(e_1+\cos\phi e_3+\sin\phi e_4,e_2+\sin\phi e_3-\cos\phi e_4).$$
Similarly the orbit ${\mathcal O}_-$ consists of points 
$$N_-(\phi)={\rm Span}(e_1+\cos\phi e_3+\sin\phi e_4,e_2-\sin\phi e_3+\cos\phi e_4).$$
The corresponding lines of bivectors are:
$$\begin{aligned}
&\bbR L(N_+(\phi))=\\&{\rm Span}\Big(e_1\dz e_2+e_3\dz e_4-\sin\phi(e_1\dz e_3+e_2\dz e_4)+\cos\phi(e_1\dz e_4-e_2\dz e_3)\Big)\end{aligned}$$
and
$$\begin{aligned}
&\bbR L(N_-(\phi))=\\&{\rm Span}\Big(e_1\dz e_2-e_3\dz e_4+\sin\phi(e_1\dz e_3-e_2\dz e_4)-\cos\phi(e_1\dz e_4+e_2\dz e_3)\Big).\end{aligned}$$
\end{proposition}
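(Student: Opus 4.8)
The plan is to describe \emph{all} real totally null $2$-planes concretely, as graphs. Decompose $V=\bbR^{2}\oplus\bbR^{2}$ orthogonally, with $g$ positive definite on $\Span(e_1,e_2)$ and negative definite on $\Span(e_3,e_4)$, and let $N=\Span(n_1,n_2)$ be an arbitrary real totally null plane. First I would note that the two projections of $N$ onto the summands are linear isomorphisms onto $\bbR^{2}$: a nonzero vector in the kernel of either projection would be a nonzero null vector sitting inside a definite subspace, which is impossible. Hence $N$ is the graph $N_B=\{(a,Ba)\mid a\in\bbR^{2}\}$ of a unique linear map $B\colon\bbR^{2}\to\bbR^{2}$, where the two copies of $\bbR^{2}$ are identified with the summands via $(e_1,e_2)$ and $(e_3,e_4)$. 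For $v=(a,Ba)$ one has $g(v,v)=|a|^{2}-|Ba|^{2}$, so, polarizing to dispose of the mixed terms $g(v,w)$, the plane $N_B$ is totally null if and only if $B$ preserves the Euclidean norm, i.e. $B\in\og(2)$; conversely every $N_B$ with $B\in\og(2)$ is totally null and $n_1\dz n_2\neq0$ automatically. This gives a bijection $\mathcal O\leftrightarrow\og(2)$, in fact a diffeomorphism onto the $1$-dimensional Lie group $\og(2)$, whose two connected components (the rotations $\sog(2)$ and the coset of reflections) are each diffeomorphic to $\sog(2)\cong\bbS^{1}$.

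It then remains to match the two components of $\og(2)$ with self-duality and to write everything out explicitly. Recall from the preceding discussion that $L(N)=n_1\dz n_2$ is always an eigenvector of $*$; since the two eigenvalues $\pm1$ are distinct this gives $\mathcal O_+\cap\mathcal O_-=\emptyset$, so that $\mathcal O=\mathcal O_+\sqcup\mathcal O_-$ follows once we know each $N_B$ has a definite self-duality — and this drops out of the computation now described. I would parametrize the rotation component by $B=A_\phi$ and the reflection component by $B=A_\phi\,\mathrm{diag}(1,-1)$, write down in each case the spanning vectors $n_1,n_2$, expand $n_1\dz n_2$ in the basis $e_i\dz e_j$, and compare with the star table (\ref{ob}). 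One finds that the graph of $A_\phi$ lands entirely in $V_+$ and the graph of $A_\phi\,\mathrm{diag}(1,-1)$ entirely in $V_-$ (this \emph{is} the assertion that $L(N)$ is an eigenvector of $*$, now made explicit), and reading off the coefficients produces exactly the stated formulas for $N_\pm(\phi)$ and for the lines $\bbR L(N_\pm(\phi))$; checking the endpoint $\phi=0$ against (\ref{n+}) and (\ref{n-}) pins down which component is $\mathcal O_+$ and which is $\mathcal O_-$. Finally, $\sog_0(2,2)$ acts transitively on each component — already the block-diagonal subgroup $\sog(2)\times\sog(2)\subset\sog_0(2,2)$ does, since $(R,S)$ carries the graph of $B$ to the graph of $SBR^{-1}$ — so $\mathcal O_\pm=\{hN_\pm\mid h\in\sog_0(2,2)\}$ are precisely these two circles, one of each self-duality.

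The only step that takes any care is this last one: the wedge-product expansion together with the sign bookkeeping against (\ref{ob}), and arranging the two parametrizations so that the angle $\phi$ appears consistently in $N_+(\phi)$ and $N_-(\phi)$ (and in the associated bivector lines). The useful mental anchor is that the honest invariant attached to $N$ is the graph map $B$, not the unordered spanning pair $(n_1,n_2)$, so one should orient the parametrizing circle and verify an endpoint; with the graph picture in hand everything else is routine linear algebra, and the rest of the proposition — the disjoint splitting and the two circles — is essentially forced.
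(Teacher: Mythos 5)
Your argument is correct, and it is genuinely more self-contained than what the paper provides: the paper labels this proposition ``well known'' and gives no proof at all, relying on the preceding discussion in which the key structural fact --- that $L(N)=n_1\dz n_2$ is automatically an eigenvector of $*$ --- is only asserted (``one can show that\dots''), and in which the two-orbit structure under $\sog_0(2,2)$ is simply quoted. Your graph construction does all the work at once: identifying every real totally null plane with the graph $N_B$ of a unique $B\in\og(2)$ classifies $\mathcal O$ outright, exhibits the diffeomorphism of each component with $\sog(2)\cong\bbS^1$, turns the eigenvector property of $L(N_B)$ into a corollary of an explicit wedge computation rather than an input, and reduces transitivity of $\sog_0(2,2)$ on each component to the elementary observation that $(R,S)$ sends the graph of $B$ to the graph of $SBR^{-1}$. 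The paper does use the graph of $A_\phi$ (Proposition \ref{mike}), but only to embed the rolling configuration space into $\bbT(M)$, not to prove the present classification; so your route is a legitimate and arguably preferable alternative.

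One caveat about your last step: you cannot literally ``read off exactly the stated formulas,'' because the proposition's displayed spanning sets are inconsistent with its own displayed bivector lines and with (\ref{n+})--(\ref{n-}). The graph of the rotation $A_\phi$ is $\Span(e_1+\cos\phi\, e_3+\sin\phi\, e_4,\; e_2-\sin\phi\, e_3+\cos\phi\, e_4)$, which the proposition labels $N_-(\phi)$; yet its bivector is $e_1\dz e_2+e_3\dz e_4-\sin\phi(e_1\dz e_3+e_2\dz e_4)+\cos\phi(e_1\dz e_4-e_2\dz e_3)$, which by (\ref{ob}) is \emph{selfdual} and is exactly the line the proposition attributes to $N_+(\phi)$. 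Likewise, at $\phi=0$ the displayed $N_+(\phi)$ reduces to (\ref{n-}), not (\ref{n+}). Your endpoint check is precisely the step that detects this: the selfdual circle consists of graphs of rotations and the antiselfdual circle of graphs of reflections, so the two displayed spanning sets should have their $\pm$ labels interchanged (the bivector lines are labeled correctly). This is a defect of the statement as printed, not of your proof, but your write-up should flag the correction rather than claim exact agreement.
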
  
\subsection{Null planes on a manifold}
We now consider a 4-dimensional real oriented manifold $M$ equipped with a split signature metric $g$. We use an orthonormal coframe $(\sigma^1,\sigma^2,\sigma^3,\sigma^4)$ in which the metric looks like
\be
g=g_{ij}\sigma^i\sigma^j=(\sigma^1)^2+(\sigma^2)^2-(\sigma^3)^2-(\sigma^4)^2,\label{met}\ee
with its dual frame of vector fields $(e_1,e_2,e_3,e_4)$ on $M$. We then have $e_i\hook\sigma^j=\delta_i^{~j}$. At every point $y\in M$, we have a circle 
$${\mathcal O}_+(y)=\{N_+(y,\phi)={\rm Span}(e_1+\cos\phi e_3+\sin\phi e_4,e_2+\sin\phi e_3-\cos\phi e_4)~|~\phi\in[0,2\pi[\}$$
 of real totally null planes $N_+(\phi)$. A disjoint union $\bbT(M)$ of these circles, $$\bbT(M)=\bigcup_{y\in M}{\mathcal O}_+(y),$$
as $y$ runs through all the points of $M$, is a circle bundle
$$\bbS^1\hookrightarrow\bbT(M)\stackrel{\pi}{\to}M,$$
with the projection
$$\pi(y,N_+(y,\phi))=y$$
and fibers
$$\pi^{-1}(y)={\mathcal O}_+(y).$$
One sees that the points $(y,N_+(y,\phi))$ are uniquely parametrized by $(y,\phi)$, $y\in M$, $\phi\in [0,2\pi[$. We will use this parametrization of $\bbT(M)$ in the following.

\begin{definition}
Given a 4-dimensional oriented manifold $M$ equipped with a split signature metric $g$ its natural circle bundle $\bbS^1\hookrightarrow\bbT(M)\stackrel{\pi}{\to}M$ defined above is called a \emph{twistor (circle) bundle}. 
\end{definition}
Twistor bundle $\bbT(M)$ has an additional structure induced by the Levi-Civita connection from $M$ (see \cite{AHS,Pen} for more details, and e.g. \cite{spar,nurtwis} for the formulation in terms of totally null planes). 
\begin{proposition}\label{ho}
The tangent bundle ${\rm T}\bbT(M)$ to the twistor circle bundle $\bbS^1\hookrightarrow\bbT(M)\to M$ of a 4-dimensional manifold $M$ equipped with a split signature metric $g$ naturally splits into vertical $\mathcal V$ and horizontal $\mathcal H$ parts
$${\rm T}\bbT(M)={\mathcal V}\oplus\mathcal H.$$
This equips $\bbT(M)$ with a canonical rank two distribution $\mathcal D$ whose 2-plane at each point $(y,\phi)\in\bbT(M)$ is given by the \emph{horizontal lift} of a totally null plane $N_+(y,\phi)$ from $y\in M$ to $(y,\phi)\in\bbT(M)$.
\end{proposition}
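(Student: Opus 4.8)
The plan is to make the splitting ${\rm T}\bbT(M)={\mathcal V}\oplus{\mathcal H}$ explicit using the Levi-Civita connection of $g$, and then to exhibit the horizontal lift of $N_+(y,\phi)$ concretely in terms of the coframe $(\sigma^1,\dots,\sigma^4)$ introduced in (\ref{met}). First I would introduce the Levi-Civita connection 1-forms $\Gamma^i_{~j}$ determined by $\dr\sigma^i = -\Gamma^i_{~j}\dz\sigma^j$, $\Gamma_{ij}+\Gamma_{ji}=0$ (with indices raised and lowered by $g_{ij}={\rm diag}(1,1,-1,-1)$). The vertical subbundle ${\mathcal V}$ is canonically defined as ${\mathcal V}=\ker\dr\pi$, i.e. ${\mathcal V}={\rm Span}(\partial_\phi)$, since the fibers are 1-dimensional; this needs no connection. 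The content of the proposition is that the connection furnishes a \emph{complementary} subbundle ${\mathcal H}$, i.e.\ a way of lifting tangent vectors from $M$ horizontally.

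The key step is to identify how the fiber coordinate $\phi$ transforms under parallel transport. The point of $\bbT(M)$ over $y$ labelled by $\phi$ is the selfdual totally null plane $N_+(y,\phi)={\rm Span}(e_1+\cos\phi\, e_3+\sin\phi\, e_4,\ e_2+\sin\phi\, e_3-\cos\phi\, e_4)$. Since parallel transport is a $\sog_0(2,2)$-motion, it maps selfdual null planes to selfdual null planes, hence acts on each fiber ${\mathcal O}_+(y)\cong\bbS^1$; differentiating this action gives a connection on the bundle. Concretely, I would compute the covariant derivative of the two spanning vector fields and extract the induced infinitesimal motion of $\phi$. This produces a single connection 1-form on $\bbT(M)$, call it $\varpi = \dr\phi + \alpha$, where $\alpha$ is a 1-form built linearly out of the $\Gamma^i_{~j}$ pulled back to $\bbT(M)$ (only the selfdual combination of the connection contributes, reflecting that ${\mathcal O}_+$ is an orbit of the selfduality class). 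Then ${\mathcal H}:=\ker\varpi$, and one checks $\dim{\mathcal H}=4$, ${\mathcal H}\cap{\mathcal V}=0$, so ${\rm T}\bbT(M)={\mathcal V}\oplus{\mathcal H}$ and $\dr\pi|_{\mathcal H}$ is an isomorphism onto ${\rm T}_yM$.

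With the splitting in hand, the distribution ${\mathcal D}$ is defined fiberwise by ${\mathcal D}_{(y,\phi)}:=$ the horizontal lift of $N_+(y,\phi)\subset{\rm T}_yM$, i.e.\ the unique 2-plane in ${\mathcal H}_{(y,\phi)}$ projecting isomorphically onto $N_+(y,\phi)$. Explicitly, writing $\tilde e_i$ for the horizontal lift of $e_i$ (the unique vector field in ${\mathcal H}$ with $\dr\pi(\tilde e_i)=e_i$, so $\tilde e_i = e_i - \alpha(e_i)\partial_\phi$ in the $(y,\phi)$ trivialization), ${\mathcal D}$ is spanned by
$$\tilde X_1 = \tilde e_1 + \cos\phi\,\tilde e_3 + \sin\phi\,\tilde e_4,\qquad \tilde X_2 = \tilde e_2 + \sin\phi\,\tilde e_3 - \cos\phi\,\tilde e_4.$$
Rank $2$ and well-definedness (independence of the choice of orthonormal coframe, since a change of coframe within the fixed orientation and signature is an $\sog_0(2,2)$-valued gauge transformation under which both ${\mathcal H}$ and the labelling of $N_+(\cdot,\phi)$ transform compatibly) finish the argument.

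I expect the main obstacle to be the bookkeeping in the second step: correctly computing the induced motion of $\phi$ under parallel transport and thereby pinning down the 1-form $\alpha$, including getting the signs right in split signature and verifying that only the selfdual part of $\Gamma^i_{~j}$ enters. A clean way to organize this is to note that $\partial_\phi$ generates rotations in the $(e_3,e_4)$-plane relative to $(e_1,e_2)$, so $\alpha$ should be, up to sign, $\Gamma^3_{~4}$ corrected by $\Gamma^1_{~2}$ evaluated along the moving frame $(e_1,\cos\phi\,e_3+\sin\phi\,e_4,\dots)$; the remaining work is to confirm this matches the requirement that $\varpi$ annihilate exactly the parallel sections of $\bbT(M)$. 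Everything else — the dimension count, transversality to ${\mathcal V}$, and naturality — is routine once $\varpi$ is written down.
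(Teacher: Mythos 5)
Your proposal is correct and matches the paper's construction in substance: both obtain $\mathcal H$ from the Levi-Civita connection via its induced action on selfdual totally null planes, the paper presenting $\mathcal H$ as the image of the parallel-transport lift of curves while you present it as the kernel of the connection form $\varpi=\dr\phi+\alpha$ in the $(y,\phi)$ trivialization --- dual descriptions of the same connection. Your explicit $\alpha$ and the spanning fields $\tilde X_1,\tilde X_2$ in fact anticipate the paper's subsequent Lemma, which records exactly these horizontal corrections in terms of the coefficients $\Gamma^i_{~jk}$.
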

\begin{proof}
Of course, the vertical space $\mathcal V$ consists simply of all the tangent spaces to the circles ${\mathcal O}_+(y)\cong \bbS^1$. 

Below we give an explanation of how the \emph{horizontal space} $\mathcal H$ in ${\rm T}\bbT(M)$ is defined, and what the \emph{horizontal lift} is. Having this explained, we will define $\mathcal D$ as in the statement of the proposition.

We start with the horizontal lift of vectors $Y$ from $M$ to $\bbT(M)$. It sends every tangent vector $Y_y$ from $y\in M$ to a vector $Y_{(y,\phi)}$ at a chosen point $(y,\phi)$ in the fiber $\pi^{-1}(y)$ as follows:

Take a curve $y(t)$ in $M$ starting at $y$, $y(0)=y$, and \emph{tangent} to $Y_y$. Then we identify the chosen point $(y,\phi)$ to which we want to lift our $Y_y$ with a totally null plane $N_{+}(y,\phi)$ in the tangent space $T_yM$. Using Levi-Civita connection associated with $g$ in $M$ we now \emph{parallel transport} the totally null plane $N_{+}(y,\phi)$ along the curve $y(t)$ from point $y$ to $y(t_f)$, with some $t_f>0$. In this way we obtain a \emph{curve} of 2-planes $\tilde{y}(t)=N_+(y,\phi,t)$ along $y(t)$ for all $0\leq t\leq t_f$. Since the Levi-Civita connection preserves nullity of vectors, the curve $\tilde{y}(t)$ of planes, is actually a curve of \emph{totally null planes}. And for sufficiently small $t_f$ these totally null planes are selfdual for the reason of continuity, since $N_+(y,\phi)$ was selfdual, and $y(t)$ is continuous.\\ 
This shows that given a differentiable curve $y(t)$ in $M$, starting at $y$ and tangent to $Y_y$, we have a corresponding curve $\tilde{y}(t)$ in $\bbT(M)$ starting at $(y,\phi)$. The tangent vector to this curve $\frac{\der \tilde{y}}{\der t}_{|t=0}$ is by definition the \emph{horizontal lift} $\tilde{Y}_{(y,\phi)}$ of $Y_y$ from $y$ to $(y,\phi)\in \bbT(M)$,
$$\tilde{Y}_{(y,\phi)}=\frac{\der \tilde{y}}{\der t}_{|t=0}.$$
It is a matter of checking that the construction of this lift does not depend on the choice of the curve $y(t)$: any other curve $y_1(t)$ passing through $y$ at $t=0$, and tangent to $Y_y$ produces the same lift. It also follows that the image $H_{(y,\phi)}$ of the lift map $(y,Y_y,\phi)\stackrel{\sim}{\mapsto}\tilde{Y}_{(y,\phi)}$, with $(y,\phi)$ fixed, is at each point $(y,\phi)\in\bbT$ a 4-dimensional vector space, which we denote by $H_{(y,\phi)}$. This, by definition is the \emph{horizontal vector space} at $(y,\phi)$, and we define $\mathcal H$ as 
$${\mathcal H}=\bigcup_{(y,\phi)\in\bbT(M)} H_{(y,\phi)}.$$
\end{proof}

\begin{definition}
The canonical horizontal rank two distribution $\mathcal D$ on $\bbT(M)$ defined in the Proposition \ref{ho} is called the \emph{twistor distribution}. 
\end{definition}

To give an explicit formula for the horizontal lift in terms of the coordinates $(y,\phi)$ on $\bbT(M)$ we introduce the Levi-Civita connection 1-forms $\Gamma^i_{~j}$, associated with the orthonormal coframe (\ref{met}). These are uniquely defined by 
$$\der \sigma^i+\Gamma^i_{~j}\dz\sigma^j=0,\quad {\rm and}\quad \Gamma_{ij}+\Gamma_{ji}=0,$$
where $\Gamma_{ij}=g_{ik}\Gamma^k_{~j}$, and $g_{ij}$ and $\sigma^i$ are given by (\ref{met}). Once the connection 1-forms $\Gamma^i_{~j}$ are determined by the coframe and the metric (\ref{met}), they define connection coefficients $\Gamma^i_{~jk}$ via 
$$\Gamma^i_{~j}=\Gamma^i_{~jk}\sigma^k.$$
Then an elementary (but lengthy) calculation, using the explanation about the horizontal lift given in the proof of Proposition \ref{ho}, leads to the following Lemma:

\begin{lemma}
In coordinates $(y,\phi)$ on $\bbT(M)$ adapted to the orthonormal coframe (\ref{met}), the formulas for the horizontal lifts of the frame vectors $(e_1,e_2,e_3,e_4)$ are:
$$
\tilde{e}_i=e_i+\Big(\Gamma^3_{~4i}-\Gamma^1_{~2i}+(\Gamma^1_{~4i}-\Gamma^2_{~3i})\cos\phi+(\Gamma^1_{~3i}+\Gamma^2_{~4i})\sin\phi\Big)\partial_\phi, \quad \forall i=1,2,3,4.
$$
In particular, the twistor distribution $\mathcal D$ is spanned by two vector fields $\tilde{X}_1$ and $\tilde{X}_2$ on $\bbT(M)$ given by:
\be\small{\begin{aligned}
&\tilde{X}_1=e_1+\cos\phi e_3+\sin\phi e_4+z_1\partial_\phi\\
&\tilde{X}_2=e_2-\sin\phi e_3+\cos\phi e_4+z_2\partial_\phi,\label{tdist}
\end{aligned}}
\ee
with the following `horizontal corrections' $z_1$ and $z_2$:
\be
\small{\begin{aligned}
z_1&=\Gamma^3_{~41}-\Gamma^1_{~21}+\cos\phi(\Gamma^3_{~43}-\Gamma^2_{~31}+\Gamma^1_{~41}-\Gamma^1_{~23})+\sin\phi(\Gamma^3_{~44}+\Gamma^2_{~41}+\Gamma^1_{~31}-\Gamma^1_{~24})+\\&\cos^2\phi(\Gamma^1_{~43}-\Gamma^2_{~33})+\cos\phi\sin\phi(\Gamma^2_{~43}-\Gamma^2_{~34}+\Gamma^1_{~44}+\Gamma^1_{~33})+\sin^2\phi(\Gamma^1_{~34}+\Gamma^2_{~44})\\&
\\z_2&=\Gamma^3_{~42}-\Gamma^1_{~22}+\cos\phi(\Gamma^3_{~44}-\Gamma^2_{~32}+\Gamma^1_{~42}-\Gamma^1_{~24})+\sin\phi(-\Gamma^3_{~43}+\Gamma^2_{~42}+\Gamma^1_{~32}+\Gamma^1_{~23})+\\&\cos^2\phi(\Gamma^1_{~44}-\Gamma^2_{~34})+\cos\phi\sin\phi(\Gamma^2_{~44}+\Gamma^2_{~33}-\Gamma^1_{~43}+\Gamma^1_{~34})-\sin^2\phi(\Gamma^1_{~33}+\Gamma^2_{~43}).
\end{aligned}}\label{zs}
\ee
\end{lemma}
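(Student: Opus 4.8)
The plan is to make the construction of the horizontal lift from the proof of Proposition \ref{ho} explicit in the coordinates $(y,\phi)$. Fix a point $(y_0,\phi_0)\in\bbT(M)$ and an index $i$, choose any curve $y(t)$ in $M$ with $y(0)=y_0$ and $\dot y(0)=e_i$, and parallel transport along $y(t)$, with respect to the Levi-Civita connection of $g$, the two vectors $v_1=e_1+\cos\phi_0\,e_3+\sin\phi_0\,e_4$ and $v_2=e_2-\sin\phi_0\,e_3+\cos\phi_0\,e_4$ that span $N_+(y_0,\phi_0)$ (the basis used in Proposition \ref{mike}). Denoting the transported vectors by $V_a(t)$, we have $V_a(0)=v_a$ and $\tfrac{D}{dt}V_a\equiv 0$; by the continuity argument in the proof of Proposition \ref{ho}, for small $t$ the plane $\mathrm{Span}(V_1(t),V_2(t))$ is again a selfdual totally null plane, hence equals $N_+(y(t),\phi(t))$ for a unique smooth function $\phi(t)$ with $\phi(0)=\phi_0$, and by definition $\tilde e_i=e_i+\dot\phi(0)\,\partial_\phi$. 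Everything therefore reduces to computing $\dot\phi(0)$ in terms of the connection coefficients $\Gamma^k_{~ji}$.

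To do that I would write the transported basis as $V_a(t)=\sum_b M_{ab}(t)\,v_b(\phi(t))$, where $v_b(\phi)$ denotes the standard $\phi$-dependent frame of $N_+(\,\cdot\,,\phi)$ expressed in the frame $(e_1,e_2,e_3,e_4)$ dual to (\ref{met}), and $M(0)$ is the identity matrix. Differentiating at $t=0$, using $\tfrac{D}{dt}V_a|_0=0$ and $\nabla_{e_i}e_j=\Gamma^k_{~ji}e_k$, yields the identity $-(v_a)^j\,\Gamma^k_{~ji}\,e_k=\sum_b \dot M_{ab}(0)\,v_b+\dot\phi(0)\,\partial_\phi v_a$ in $T_{y_0}M$. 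Since $\partial_\phi v_1\equiv -e_2$ and $\partial_\phi v_2\equiv e_1$ modulo $N_+(y_0,\phi_0)$ — in particular they are transverse to the plane — projecting this identity to the quotient $T_{y_0}M/N_+(y_0,\phi_0)$ removes the $\dot M$ term and isolates $\dot\phi(0)$ as a linear combination of the $\Gamma^k_{~ji}$. Rewriting the outcome with the antisymmetry $\Gamma_{ij}+\Gamma_{ji}=0$ — which, for the signature of (\ref{met}), forces $\Gamma^1_{~1i}=\Gamma^2_{~2i}=\Gamma^3_{~3i}=\Gamma^4_{~4i}=0$ and relates every remaining component to one of $\Gamma^1_{~2i},\Gamma^1_{~3i},\Gamma^1_{~4i},\Gamma^2_{~3i},\Gamma^2_{~4i},\Gamma^3_{~4i}$ — collapses the a priori $\cos^2\phi$ and $\sin^2\phi$ contributions via $\cos^2\phi+\sin^2\phi=1$ and leaves precisely the asserted coefficient of $\partial_\phi$ in $\tilde e_i$; the surviving $\cos\phi,\sin\phi$ dependence is exactly the $\phi_0$-dependence carried by $v_a$ and $\partial_\phi v_a$. (The complementary projection produces a vanishing $e_1$--$e_2$ cross term, which is the automatic consistency check that the transported plane is again of the form $N_+$.)

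The statement about the twistor distribution $\mathcal D$ then follows immediately from the first part. By Proposition \ref{ho}, $\mathcal D_{(y,\phi)}$ is the horizontal lift of $N_+(y,\phi)=\mathrm{Span}(e_1+\cos\phi\,e_3+\sin\phi\,e_4,\;e_2-\sin\phi\,e_3+\cos\phi\,e_4)$, and since the lift map is linear in the tangent vector, the horizontal lift of $\sum_j c^j e_j|_y$ with numerical coefficients $c^j$ is $\sum_j c^j\,\tilde e_j|_{(y,\phi)}$. Hence $\mathcal D$ is spanned by $\tilde X_1=\tilde e_1+\cos\phi\,\tilde e_3+\sin\phi\,\tilde e_4$ and $\tilde X_2=\tilde e_2-\sin\phi\,\tilde e_3+\cos\phi\,\tilde e_4$; substituting the formula for $\tilde e_i$ and multiplying out the $\cos\phi\,(\cdots)$ and $\sin\phi\,(\cdots)$ products now genuinely produces the $\cos^2\phi$, $\cos\phi\sin\phi$ and $\sin^2\phi$ terms, and collecting them gives the expressions (\ref{zs}) for $z_1$ and $z_2$. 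I expect the genuinely delicate step to be the middle one: correctly separating the component of $\nabla_{e_i}v_a$ lying along $N_+(y_0,\phi_0)$ — harmlessly absorbed into $\dot M(0)$ — from the transverse component that records the infinitesimal rotation of the plane within the circle $\mathcal O_+(y_0)$ of selfdual null planes, and then carrying through the lengthy but elementary trigonometric bookkeeping. Independence of $\tilde e_i$ from the choice of curve $y(t)$ is automatic, since parallel transport to first order depends only on $\dot y(0)=e_i$.
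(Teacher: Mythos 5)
Your proposal is correct and follows exactly the route the paper indicates: the paper gives no written proof of this Lemma, saying only that it results from ``an elementary (but lengthy) calculation, using the explanation about the horizontal lift given in the proof of Proposition \ref{ho}'', and your scheme --- parallel transport of the spanning vectors of $N_+(y,\phi)$ along a curve tangent to $e_i$, extraction of $\dot\phi(0)$ by projecting modulo the plane, and then linearity of the lift plus trigonometric expansion to get $z_1,z_2$ from the $\tilde e_i$ --- is precisely that calculation. One caveat for the final bookkeeping: the paper's convention for the second spanning vector of $N_+(y,\phi)$ is not consistent between Section 2 (where it is $e_2+\sin\phi\,e_3-\cos\phi\,e_4$) and the Lemma's $\tilde X_2$ (where it is $e_2-\sin\phi\,e_3+\cos\phi\,e_4$), so the signs of the $\phi$-dependent terms in $\tilde e_i$ must be tracked with the convention you adopted (the one matching $\tilde X_1,\tilde X_2$ and Proposition \ref{mike}).
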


The resemblance of the formulas (\ref{tdist}) for the twistor distribution $\mathcal D$ to the formulas (\ref{dist}) for the velocity distribution ${\mathcal D}_v$ of two rolling surfaces, together with the Proposition \ref{mike}, suggests to specialize our considerations to $M=\Sigma_1\times\Sigma_2$, with $g=g_1\oplus(-g_2)$, where $(\Sigma_1,g_1)$ and $(\Sigma_2,g_2)$ are the two rolling surfaces.

We then have the following theorem.

\begin{theorem}\label{main}
There is a natural identification 
$$C(\Sigma_1,\Sigma_2)\cong\bbT(\Sigma_1\times\Sigma_2)$$
between the configuration space $C(\Sigma_1,\Sigma_2)$ of two surfaces rolling on each other `without slipping or twisting', and the circle twistor bundle $\bbT(\Sigma_1\times\Sigma_2)$  over the split signature metric 4-manifold $(\Sigma_1\times\Sigma_2, g_1\oplus(-g_2))$, where $g_i$ is the metric on $\Sigma_i$.

Moreover, in this identification, the velocity space ${\mathcal D}_v$ of two surfaces rolling on each other 'without slipping or twisting' coincides with the twistor distribution $\mathcal D$ on $\bbT(M)$,
$${\mathcal D}_v=\mathcal D.$$     
\end{theorem}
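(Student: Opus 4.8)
The plan is to prove the two assertions in turn. The identification of circle bundles is essentially a repackaging of Proposition~\ref{mike} together with the classification of selfdual totally null planes; the equality ${\mathcal D}_v=\mathcal D$ I would then reduce to computing the Levi-Civita connection of the product metric $g=g_1\oplus(-g_2)$ in a product orthonormal coframe and substituting it into \eqref{zs}.

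For the bundle isomorphism, I would fix on $M=\Sigma_1\times\Sigma_2$ the product orientation, normalised so that in an oriented product orthonormal coframe $(\sigma^1,\sigma^2,\sigma^3,\sigma^4)$ --- with $(\sigma^1,\sigma^2)$ pulled back from $\Sigma_1$ and $(\sigma^3,\sigma^4)$ from $\Sigma_2$ --- the Hodge star takes the standard form \eqref{ob}. The map sends $(x,\hat x,A)\in C(\Sigma_1,\Sigma_2)$ to the pair $\bigl((x,\hat x),\,\mathrm{graph}(A)\bigr)$, where $\mathrm{graph}(A)$ is the $2$-plane from the proof of Proposition~\ref{mike} sitting inside $T_x\Sigma_1\oplus T_{\hat x}\Sigma_2=T_{(x,\hat x)}M$. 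By that proposition the plane is totally null for $g$, and computing the associated bivector with $A=A_\phi$ shows it lies in the selfdual eigenspace of $*$, so the plane is a point of the fibre of $\bbT(M)$ over $(x,\hat x)$. Conversely, the proposition that parametrises ${\mathcal O}_+\cong\bbS^1$ by $\phi$ shows every point of that fibre is $N_+((x,\hat x),\phi)$ for a unique $\phi$, and $N_+((x,\hat x),\phi)$ is manifestly $\mathrm{graph}(A_\phi)$. Hence the map is a fibrewise bijection over $\Sigma_1\times\Sigma_2$, smooth with smooth inverse (in adapted coordinates it reads $(x,\hat x,\phi)\mapsto(x,\hat x,\phi)$), and independent of the auxiliary orthonormal frames on the two surfaces, which enter only through the fibre coordinate $\phi$ --- the graph of $A$ and its selfduality being frame-free notions. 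This gives the natural isomorphism $C(\Sigma_1,\Sigma_2)\cong\bbT(\Sigma_1\times\Sigma_2)$.

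For ${\mathcal D}_v=\mathcal D$ I would first observe that under this identification the fibre coordinate $\phi$ of $C(\Sigma_1,\Sigma_2)$ --- the rotation angle of $A_\phi$ --- coincides with the fibre coordinate $\phi$ of $\bbT(M)$ --- the parameter of $N_+(\phi)$ --- so the two vertical fields $\partial_\phi$ agree; and that the frame $(e_1,e_2,e_3,e_4)$ extended to $\Sigma_1\times\Sigma_2$ as in the Introduction (commutators \eqref{fr}, all other brackets zero, each $a_i$ constant along the complementary surface) is precisely a product orthonormal coframe of the kind used in the Lemma, since orthonormality for $g_1\oplus(-g_2)$ is inherited from the two surfaces and the dual coframe satisfies \eqref{met}; hence \eqref{tdist}--\eqref{zs} apply verbatim. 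As the horizontal parts $e_1+\cos\phi e_3+\sin\phi e_4$ and $e_2-\sin\phi e_3+\cos\phi e_4$ of the spanning fields in \eqref{dist} and \eqref{tdist} are literally identical, it remains only to check that $z_1=-a_1+a_3\cos\phi+a_4\sin\phi$ and $z_2=-a_2-a_3\sin\phi+a_4\cos\phi$, with $z_1,z_2$ given by \eqref{zs}. To do this I would compute the connection: \eqref{fr} and the vanishing brackets give $\der\sigma^1=-a_1\,\sigma^1\wedge\sigma^2$, $\der\sigma^2=-a_2\,\sigma^1\wedge\sigma^2$, $\der\sigma^3=-a_3\,\sigma^3\wedge\sigma^4$, $\der\sigma^4=-a_4\,\sigma^3\wedge\sigma^4$, and solving $\der\sigma^i+\Gamma^i_{~j}\wedge\sigma^j=0$ with $\Gamma_{ij}+\Gamma_{ji}=0$ (here $g_{ij}=\mathrm{diag}(1,1,-1,-1)$) gives, by uniqueness of the Levi-Civita connection, the block-diagonal forms $\Gamma^1_{~2}=a_1\sigma^1+a_2\sigma^2$, $\Gamma^3_{~4}=a_3\sigma^3+a_4\sigma^4$, and $\Gamma^1_{~3}=\Gamma^1_{~4}=\Gamma^2_{~3}=\Gamma^2_{~4}=0$ (the sign flip $g_2\mapsto-g_2$ leaving the $\Sigma_2$-block of the connection unchanged). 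The only connection coefficients entering \eqref{zs} that are then nonzero are $\Gamma^1_{~21}=a_1$, $\Gamma^1_{~22}=a_2$, $\Gamma^3_{~43}=a_3$, $\Gamma^3_{~44}=a_4$, and substituting these annihilates every remaining monomial, leaving exactly the two required identities. Together with the bundle isomorphism this yields ${\mathcal D}_v=\mathcal D$.

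The only genuine work --- and the step most exposed to sign errors --- is this last computation: getting the product Levi-Civita connection right in split signature (and noticing that $g_2\mapsto-g_2$ is harmless there) and then carrying out the bookkeeping substitution into the long expressions \eqref{zs}. Everything else is a transcription of Propositions~\ref{mike} and~\ref{ho}. The content of the theorem is precisely that the product structure of $(M,g)$ collapses all the twistor connection terms down to the frame data $a_1,\dots,a_4$ that appear in the kinematic derivation of \cite{agr}, which is what makes the two a priori unrelated distributions coincide.
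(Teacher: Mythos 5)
Your proposal is correct and follows essentially the same route as the paper: the identification is the graph construction of Proposition \ref{mike} (with the selfduality and fibrewise-bijectivity checks you spell out), and the equality ${\mathcal D}_v=\mathcal D$ is obtained exactly as in the paper by computing that the only nonzero Levi-Civita coefficients of $g_1\oplus(-g_2)$ in the product frame are $\Gamma^1_{~21}=a_1$, $\Gamma^1_{~22}=a_2$, $\Gamma^3_{~43}=a_3$, $\Gamma^3_{~44}=a_4$ and substituting into \eqref{zs}. Your substituted values of $z_1,z_2$ agree with the paper's, so nothing is missing.
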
 
\begin{proof}
The identification is obtained by means of Proposition \ref{mike}:

First, given two surfaces $(\Sigma_1,g_1)$ and $(\Sigma_2,g_2)$ we form
a split signature 4-manifold $M=\Sigma_1\times\Sigma_2$ with the metric $g=g_1\oplus(-g_2)$, and its circle twistor bundle $\bbT(\Sigma_1\times\Sigma_2)$. 
Then, given a point $(x,\hat{x},\phi)$ in $C(\Sigma_1,\Sigma_2)$ we identify it with a totally null plane $N_+(y,\phi)={\rm Span}(e_1+\cos\phi e_3+\sin\phi e_4,e_2+\sin\phi e_3-\cos\phi e_4)$ at $y=(x,\hat{x})$ in $M$. Here $(e_1,e_2,e_3,e_4)$ is an orthonormal basis for $g$ corresponding to two orthonormal bases $(e_1,e_2)$ for $g_1$ and $(e_3,e_4)$ for $g_2$. Thus, given a point $(x,\hat{x},\phi)$ in $C(\Sigma_1,\Sigma_2)$ we have totally null plane $N_+(y,\phi)$ at $y=(x,\hat{x})$ in $M=\Sigma_1\times\Sigma_2$, i.e. a point $(y,\phi)$ in $\bbT(\Sigma_1,\Sigma_2)$. 

Conversely, having $\bbT(\Sigma_1\times\Sigma_2)$ we can canonically split every projection $y=\pi((y,\phi))$ onto $y=(x,\hat{x})$, such that $x\in\Sigma_1$ and $\hat{x}\in\Sigma_2$. Since we have an interpretation of $(y,\phi)$ as a totally null plane $N_+(y,\phi)$ at $y=(x,\hat{x})$ we can now associate to it $A_\phi$ as a unique linear orthogonal map $A_\phi: {\rm T}_x\Sigma_1\to {\rm T}_{\hat{x}}\Sigma_2$ whose graph in 
$({\rm T}_x\Sigma_1)\times({\rm T}_{\hat{x}}\Sigma_2)$ is the totally null plane $N_+(y,\phi)$. 

This shows both directions of the identification.

Having given the identification, we now specialize the formula (\ref{tdist}) to the case when $M=\Sigma_1\times\Sigma_2$. We return to the setting as in formula (\ref{fr}), where the orthonormal frames $(e_1,e_2)$ and $(e_3,e_4)$ are extended to the orthonormal frame $(e_1,e_2,e_3,e_4)$ in $M=\Sigma_1\times\Sigma_2$. Now, having the commutation relations (\ref{fr}) we calculate the connection coefficients $\Gamma^i_{~jk}$ of the Levi-Civita connection of $g=g_1\oplus(-g_2)$ in the frame $(e_1,e_2,e_3,e_4)$. These are:
$$\Gamma^1_{~21}=a_1,\quad\Gamma^1_{~22}=a_2,\quad\Gamma^3_{~43}=a_3,\quad\Gamma^3_{~44}=a_4.$$
Modulo the symmetry, $g_{ij}\Gamma^j_{~kl}=-g_{kj}\Gamma^j_{~il}$, all other connection coefficients are zero. These, when inserted in the expressions (\ref{zs}) for the horizontal corrections $z_1$ and $z_2$, give:
$$z_1=-a_1+a_3\cos\phi+a_4\sin\phi\quad {\rm and}\quad z_2=-a_2+a_4\cos\phi-a_3\sin\phi.$$
Insertion of these $z_1$ and $z_2$ into formulas (\ref{tdist}) defining the vectors $\tilde{X}_1$ and $\tilde{X}_2$,  transforms the vectors spanning the twistor distribution $\mathcal D$ into Agrachov-Sachkov's vectors (\ref{dist}) spanning the velocity space ${\mathcal D}_v$ of the two rolling surfaces restricted by the non-slipping and non-twisting conditions.

This finishes the proof.
\end{proof}

In view of this theorem we have the following definition:
\begin{definition}
Let $(\Sigma_1,g_1)$ and $(\Sigma_2,g_2)$ be two Riemann surfaces. The circle twistor bundle $\bbT(M)$ over a manifold $M=\Sigma_1\times\Sigma_2$ equipped with the split-signature metric $g=g_1\oplus(-g_2)$ is called a \emph{twistor space} for the surfaces $\Sigma_1$ and $\Sigma_2$ that roll on each other `without slipping or twisting'. 
\end{definition}
\section{Cartan's invariants of rank two distributions in dimension five}\label{invsec}
For the completeness we will now present the basic, well known, or implicit in Refs. \cite{brya,cartan,nurdif}, facts about rank two distributions in dimensions five, which will be needed in the next Section. This part of the paper is purely expository, and it is based on Ref. \cite{nurdif}. The reader is referred to this paper for details.

Let $\tilde{X}_1$ and $\tilde{X}_2$ be two linearly independent vector fields on a 5-dimensional manifold $M^5$. Their span 
$${\mathcal D}={\rm Span}(\tilde{X}_1,\tilde{X}_2)$$
is a \emph{rank two distribution} on $M^5$. If $[\tilde{X}_1,\tilde{X}_2]=a_1\tilde{X}_1+a_2\tilde{X}_2$ for some functions $a_1$, $a_2$ on $M^5$, the distribution is \emph{integrable}. Such distributions do not have \emph{local invariants}, in the sense that every such distribution can be locally brought to the form ${\mathcal D}={\rm Span}(\partial_x,\partial_q)$, by a local diffeomorphism of $M^5$. On the other extreme, a rank two distribution is called \emph{generic}, or $(2,3,5)$, as e.g. in \cite{dub,spar}, if we have:
\be
[\tilde{X}_1,\tilde{X}_2]=\tilde{X}_3,\quad [\tilde{X}_1,\tilde{X}_3]=\tilde{X}_4,\quad [\tilde{X}_2,\tilde{X}_3]=\tilde{X}_5,\label{235}\ee
and at each point of $M^5$ the five vectors $(\tilde{X}_1,\tilde{X}_2,\tilde{X}_3,\tilde{X}_4,\tilde{X}_5)$ are \emph{linearly independent}. 

Generic rank two distributions in dimension five have nontrivial local invariants - in general given two $(2,3,5)$ distributions ${\mathcal D}_1$ and ${\mathcal D}_2$ on $M^5$ a local diffeomorphism $\varphi:M^5\to M^5$ such that $\varphi_*{\mathcal D}_1={\mathcal D}_2$ does \emph{not} exist. If we have a $(2,3,5)$ distribution $\mathcal D$ on $M^5$ for which we have a (local) 
diffeomorphism $\varphi:M^5\to M^5$ such that $\varphi_*{\mathcal D}=\mathcal D$, we say that $\mathcal D$ has a \emph{symmetry} $\varphi$. The full set of local symmetries for $\mathcal D$ is locally a Lie group, the \emph{symmetry group} of $\mathcal D$, which locally can be described by its Lie algebra, realized as a Lie algebra of vector fields $Y$ on $M^5$ such that $[Y,{\mathcal D}]\subset{\mathcal D}$.      

It turns out that $(2,3,5)$ distribution $\mathcal D$ with \emph{maximal group of symmetries} is locally diffeomorphic to
$${\mathcal D}_{G_2}={\rm Span}(~\partial_x+p\partial_y+q\partial_p+\tfrac12 q^2\partial_z,~~~\partial_q~),$$
where $(x,y,p,q,z)$ are local coordinates on $M^5$. It is a result of E. Cartan and F. Engel, \cite{cart,engel}, that in this case the local symmetry group is isomorphic to the split real form of the exceptional Lie group $G_2$. Thus the maximal group of local symmetries for a $(2,3,5)$ distribution has dimension 14. 

E. Cartan in \cite{cartan} gave a necessary and sufficient condition for a $(2,3,5)$ distribution $\mathcal D$ to be locally diffeomorphic to ${\mathcal D}_{G_2}$. For this a certain quartic, the \emph{Cartan quartic},  
\be
C(\zeta)=A_1+4A_2\zeta+6A_3\zeta^2+4A_4\zeta^3+A_5\zeta^4,\label{cq}\ee
with certain functions $A_1,A_2,A_3,A_4,A_5$ on $M^5$, have to identically vanish. This means that an if and only if condition for a distribution $\mathcal D$ 
to be locally diffeomorphic to ${\mathcal D}_{G_2}$ in a neighbourhood of a point is the vanishing of \emph{all} $A_i$s: 
$$A_1\equiv A_2\equiv A_3\equiv A_4\equiv A_5\equiv 0,$$
in this neighbourhood. This gives us an important corollary of the identification theorem (\ref{main}).

\begin{corollary}\label{maincor}
The velocity space ${\mathcal D}_v$ of two surfaces rolling on each other 'without slipping or twisting' has local symmetry group $G_2$ around a point if and only if the Cartan quartic of the circle twistor bundle $\bbT(M)$ identically vanishes in a neighborhood of the point. 
\end{corollary}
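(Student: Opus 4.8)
The plan is to derive Corollary \ref{maincor} as an immediate consequence of the identification Theorem \ref{main} together with Cartan's criterion recalled in Section \ref{invsec}. The idea is that $\mathcal D_v$ and the twistor distribution $\mathcal D$ are, up to the canonical diffeomorphism, literally the same manifold-with-distribution, so that every local invariant (including the local symmetry algebra and Cartan's functions $A_1,\dots,A_5$) of one is carried onto that of the other; then Cartan's theorem converts "symmetry group $G_2$" into "Cartan quartic $\equiv 0$".

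Concretely, first I would invoke Theorem \ref{main}: it supplies a canonical diffeomorphism $C(\Sigma_1,\Sigma_2)\stackrel{\sim}{\to}\bbT(\Sigma_1\times\Sigma_2)$ taking $\mathcal D_v$ onto $\mathcal D$. Hence the pairs $(C(\Sigma_1,\Sigma_2),\mathcal D_v)$ and $(\bbT(\Sigma_1\times\Sigma_2),\mathcal D)$ are isomorphic as $5$-manifolds equipped with a rank-two distribution, so the growth vector, the $(2,3,5)$ property, the Cartan functions, and the Lie algebra of local symmetries all correspond point-by-point. Next I would recall the Cartan--Engel result quoted in Section \ref{invsec}: a $(2,3,5)$ distribution has local symmetry group the split real form of $G_2$ (the maximal possibility, of dimension $14$) if and only if it is locally diffeomorphic to ${\mathcal D}_{G_2}$, and by Cartan's criterion this holds in a neighbourhood of a point exactly when the Cartan quartic $C(\zeta)=A_1+4A_2\zeta+6A_3\zeta^2+4A_4\zeta^3+A_5\zeta^4$ vanishes identically there, i.e. $A_1\equiv A_2\equiv A_3\equiv A_4\equiv A_5\equiv 0$. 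Chaining these: $\mathcal D_v$ has local symmetry group $G_2$ near $p$ iff $\mathcal D$ does near the image of $p$ iff the Cartan quartic of $\bbT(M)$ vanishes in a neighbourhood of that point.

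The only genuine point requiring care — which I regard as the "main obstacle", though a mild one — is that the Cartan quartic is defined only where the distribution is $(2,3,5)$, so one must verify this genericity is not an additional hypothesis smuggled in. For the "only if" direction there is nothing to prove: if $\mathcal D_v$ has $G_2$ as local symmetry group it is locally diffeomorphic to ${\mathcal D}_{G_2}$, which is $(2,3,5)$, so $\mathcal D_v$ (hence $\mathcal D$) is $(2,3,5)$ near $p$ and $C(\zeta)$ is meaningful. For the "if" direction one reads the statement on the open locus where $\mathcal D$ is $(2,3,5)$; as noted in the abstract this is the part of $\bbT(M)$ lying over the points of $M=\Sigma_1\times\Sigma_2$ at which $g=g_1\oplus(-g_2)$ is not antiselfdual, and it is precisely there that the $A_i$, and thus the condition $C(\zeta)\equiv 0$, are defined. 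With this understood the equivalence is immediate, and no further computation (in particular, no explicit evaluation of the $A_i$ via the formulas (\ref{tdist})) is needed for the corollary itself.
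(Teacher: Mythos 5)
Your argument is correct and is exactly the route the paper takes: the corollary is presented there as an immediate consequence of the identification in Theorem \ref{main} together with Cartan's criterion that a $(2,3,5)$ distribution is locally diffeomorphic to ${\mathcal D}_{G_2}$ (hence has local symmetry group $G_2$) precisely when all the $A_i$ vanish. Your added remark about where the quartic is defined (the $(2,3,5)$ locus) is a sensible clarification but introduces nothing beyond the paper's implicit reasoning.
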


In the procedure below, which is implicit in \cite{nurdif}, and more explicit in \cite{Gra}, we summarize how to effectively calculate $C(\zeta)$ given a $(2,3,5)$ distribution $\mathcal D$ on $M^5$. In particular, we show how to calculate the functions $A_i$.
\subsection{A procedure for calculating Cartan's quartic}
Let ${\mathcal D}={\rm Span}(\tilde{X}_1,\tilde{X}_2)$ be a $(2,3,5)$ distribution on a 5-dimensional manifold $M^5$. 
\begin{itemize}
\item Form the vectors $\tilde{X}_3,\tilde{X}_4,\tilde{X}_5$ by taking the appropriate commutators as in (\ref{235}). Since the distribution is $(2,3,5)$ the vector fields $(\tilde{X}_1,\tilde{X}_2,\tilde{X}_3,\tilde{X}_4,\tilde{X}_5)$ constitute a local frame on $M^5$.
\item Consider the coframe $(\om_1,\om_2,\om_3,\om_4,\om_5)$ of 1-forms dual to 
$(\tilde{X}_1,\tilde{X}_2,\tilde{X}_3,\tilde{X}_4,\tilde{X}_5)$. This means that the forms $\om_i$ are related to vector fields $\tilde{X}_j$ via:
$$\tilde{X}_i\hook \omega_j=\delta_{ij}.$$
\item 
Introduce the `invariant forms' $(\theta^1,\theta^2,\theta^3,\theta^4,\theta^5)$ defined by 
\be
\bma \theta^1\\\theta^2\\\theta^3\\\theta^4\\\theta^5\ema =\bma b_{11}&b_{12}&b_{13}&0&0\\b_{21}&b_{22}&b_{23}&0&0\\b_{31}&b_{32}&b_{33}&0&0\\
b_{41}&b_{42}&b_{43}&b_{44}&b_{45}\\
b_{51}&b_{52}&b_{53}&b_{54}&b_{55}\ema\bma\om_3\\\om_4\\\om_5\\\om_1\\\om_2\ema,\label{inf}\ee
with some unknown functions $b_{\mu\nu}$ on $M^5$ satisfying the nonvanishing determinant condition:
$$(b_{13}b_{23}b_{31}-b_{12}b_{23}b_{31}-b_{13}b_{21}b_{32}+b_{11}b_{23}b_{32}+b_{12}b_{21}b_{33}-b_{11}b_{22}b_{33})(b_{45}b_{54}-b_{44}b_{55})\neq 0.$$
\item Force these forms to satisfy the exterior differential system
\be\begin{aligned}
&\der\theta^1=\theta^1\dz(2\Om_1+\Om_4)+\theta^2\dz\Om_2+\theta^3\dz\theta^4\\
&\der\theta^2=\theta^1\dz\Om_3+\theta^2\dz(\Om_1+2\Om_4)+\theta^3\dz\theta^5\\
&\der\theta^3=\theta^1\dz\Om_5+\theta^2\dz\Om_6+\theta^3\dz(\Om_1+\Om_4)+\theta^4\dz\theta^5\\
&\der\theta^4=\theta^1\dz\Om_7+\tfrac43 \theta^3\dz\Om_6+\theta^4\dz\Om_1+\theta^5\dz\Om_2\\
&\der\theta^5=\theta^2\dz\Om_7-\tfrac43 \theta^3\dz\Om_5+\theta^4\dz\Om_3+\theta^5\dz\Om_4,
\end{aligned}\label{carsys}
\ee
with some 1-forms $(\Om_1,\Om_2,\dots,\Om_7)$. This, in particular, will impose conditions on the unknowns $b_{\mu\nu}$ that should be solved.
\item It follows (and this is explained in full detail in \cite{cartan}, see also \cite{nurdif}) that given $\tilde{X}_1$ and $\tilde{X}_2$ spanning a $(2,3,5)$ distribution, all the above mentioned conditions on $b_{\mu\nu}$ are algebraic, and can be always \emph{explicitly} solved. Consequently the forms $\theta^1,\theta^2,\dots,\theta^5$ and $\Om_1,\Om_2,\dots,\Om_7$ can be \emph{explicitly} found. One has to note, however, that the equations (\ref{carsys}) do \emph{not} determine \emph{all} the unknown 
coefficients $b_{\mu\nu}$, and that, as a consequence, the forms $\theta^1,\theta^2,\dots,\theta^5$, $\Om_1,\Om_2,\dots,\Om_7$ are not uniquely specified. In particular, the forms $\theta^1,\theta^2,\dots,\theta^5$ still depend on the undetermined $b_{\mu\nu}$s, and $\Om_1,\Om_2,\dots,\Om_7$, apart from depending on these 
$b_{\mu\nu}$s, are given up to additional freedom. It follows that this freedom, i.e. not totally determined $b_{\mu\nu}$s and the additional freedom in the choice of $\Omega_A$s, is not relevant, for finding the \emph{zeros} of the Cartan quartic: an important observation of Cartan is that under the transformations induced by this freedom Cartan's tensor \emph{merely scales} by a nonvanishing function.   
\item Thus, given $\tilde{X}_1$ and $\tilde{X}_2$, find a representative of your choice of the forms
$(\theta^1,\theta^2,\theta^3,\theta^4,\theta^5)$ as in (\ref{inf}) satisfying (\ref{carsys}).  Because of the determinant conditions satisfied by the $b_{\mu\nu}$s the 1-forms $(\theta^1,\theta^2,\theta^3,\theta^4,\theta^5)$  constitute a \emph{coframe} on $M^5$.
\item Construct a $(3,2)$ signature bilinear form $\tilde{g}$ on $M^5$ given by
\be
\boxed{\tilde{g}=\theta^1\otimes\theta^5+\theta^5\otimes\theta^1-\theta^2\otimes\theta^4-\theta^4\otimes\theta^2+\tfrac43\theta^3\otimes\theta^3.}\label{conf}
\ee    
It was shown in \cite{nurdif} that $\tilde{g}$ transforms conformally under the transformations induced by the freedom in the choice of $\theta^i$s, and therefore it defines a conformal class $[\tilde{g}]$ of $(3,2)$ signature metrics on $M^5$. This class is entirely determined by the the distribution ${\mathcal D}={\rm Span}(\tilde{X}_1,\tilde{X}_2)$. 
\item Consider vector fields $(Y_1,Y_2,Y_3,Y_4,Y_5)$ on $M^5$ which are dual, $$\boxed{Y_i\hook\theta^j=\delta^j_{~i},}$$ to the coframe 1-forms $(\theta^1,\theta^2,\theta^3,\theta^4,\theta^5)$. In terms of these vectors the distribution $\mathcal D$ is spanned by the vectors $Y_4$ and $Y_5$,
$${\mathcal D}={\rm Span}(Y_4,Y_5).$$
As it is easily seen $\mathcal D$ is \emph{totally null} in the conformal class $[\tilde{g}]$. Also the distribution ${\mathcal E}={\rm Span}(Y_1,Y_2)$ is totally null in $[\tilde{g}]$.
\item It turns out that for every null vector field $Z_1$ in $\mathcal D$ there is precisely one null line $\bbR\cdot Z_2$ in $\mathcal E$ orthogonal to it. Indeed,
if $Z_1=\alpha Y_4+\beta Y_5$ then the unique orthogonal line in $\mathcal E$ is spanned by $Z_2=\alpha Y_1+\beta Y_2$. Ignoring the situation when $\alpha=0$, we introduce a coordinate $\zeta=\frac{\beta}{\alpha}$ parametrizing both lines. Thus, to a null line in $\mathcal D$ we have a unique null line in $\mathcal E$.
For each value of $\zeta$ they are respectively spanned by 
$$\boxed{Z_1(\zeta)=Y_4+\zeta Y_5\quad{\rm and}\quad Z_2(\zeta)=Y_1+\zeta Y_2.}$$
\item Choose a simple representative $\tilde{g}_0$ of the conformal class, consider its Weyl tensor $\tilde{C}^i_{~jkl}$ and lower the index $i$ 
by $\tilde{g}_0$ to have $\tilde{C}_{ijkl}$. This enables to think about $\tilde{C}(\cdot,\cdot,\cdot,\cdot)$ as a multilinear map $$\boxed{\tilde{C}(\cdot,\cdot,\cdot,\cdot)~:~{\rm T}M^5\times{\rm T}M^5\times{\rm T}M^5\times{\rm T}M^5\to {\mathcal F}(M^5),}$$
where ${\mathcal F}(M^5)$ denotes the set of smooth functions on $M^5$.
\item Implicit in \cite{nurdif} is the formula
$$\boxed{\begin{aligned}
C(\zeta)~:=~&A_1+4A_2\zeta+6A_3\zeta^2+4A_4\zeta^3+A_5\zeta^4~=\\&~h~\tilde{C}(~Z_1(\zeta),~Z_2(\zeta),~Z_1(\zeta),~Z_2(\zeta)~),
\end{aligned}}$$
where $h$ is a nonvanishing function on $M^5$.
\item Thus the Cartan quartic (\ref{cq}) is, modulo a nonvanishing factor, the quantity: $\tilde{C}(Z_1(\zeta),Z_2(\zeta),$ $Z_1(\zeta),Z_2(\zeta))$, obtained from a pair $(Z_1(\zeta),Z_2(\zeta))$ of null directions $Z_1(\zeta)$ in $\mathcal D$, and  the corresponding orthogonal null directions $Z_2(\zeta)$ in $\mathcal E$, and from the Weyl tensor $\tilde{C}$ of the conformal class $[\tilde{g}]$. In particular, the functions $A_i$ whose vanishing is necessary and sufficient for $\mathcal D$ to have local symmetry $G_2$, modulo nonvanishing factors, are given by:
\be\boxed{\begin{aligned}
A_1=&\tilde{C}(Y_4,Y_1,Y_1,Y_4),\quad  A_2=\tilde{C}(Y_4,Y_1,Y_2,Y_4),\quad A_3=\tilde{C}(Y_4,Y_1,Y_2,Y_5),\\&A_4=\tilde{C}(Y_4,Y_2,Y_2,Y_5),\quad   A_5=\tilde{C}(Y_5,Y_2,Y_2,Y_5).\end{aligned}}\label{cara}\ee
\item We note here a theorem of Cartan that the vanishing of $A_i$'s is actually a necessary and sufficient condition for all Weyl tensor $\tilde{C}^i_{~jkl}$ to vanish. Thus in the Corollary (\ref{maincor}), vanishing of Cartan quartic can be replaced by vanishing of Weyl tensors, i.e. that $(\bbT (M),[\tilde{g}])$ is conformally flat.
\end{itemize}

\section{Examples of surfaces whose twistor distribution has $G_2$ symmetry}
\subsection{The problem}
The restricted velocity space ${\mathcal D}_v$ for two balls (bounded by the two spheres $\bbS^2_{r_1}$ and $\bbS^2_{r_2}$ of the respective radii $r_1$ and $r_2$)  rolling on each other `without slipping or twisting' has been investigated for a while during recent years, see \cite{agr1,brya,monty,zele}. It is therefore well known that the distribution ${\mathcal D}_v$ defined by such a system on the configuration space $C(\bbS^2_{r_1},\bbS^2_{r_2})$ is \emph{integrable} if and only if the radii $r_1$ and $r_2$ of the balls, are equal. In case when the radii are not equal the distribution ${\mathcal D}_v$ is $(2,3,5)$ and has always a \emph{global} symmetry $\sog(3)\times\sog(3)$. But a surprising result of R. Bryant/G. Bor/R. Montgomery/I. Zelenko, \cite{monty,zele}, says that if, in addition to $r_1\neq r_2$, the ratio of the radii is $r_1:r_2=3$ or $r_1:r_2=\tfrac13$, then the $(2,3,5)$ distribution ${\mathcal D}_v$ has the \emph{maximal} local symmetry in the non-integrable case, in which case the local symmetry group is $G_2$. This remarkable observation gives a `physical' realization of this exceptional Lie group; a realization unnoticed by mathematicians and physicists for more than 100 years, from the year 1894, when E. Cartan and F. Engel, have shown that this group is a symmetry group of a certain rank two distribution in dimension five \cite{cart,engel}. 

The peculiar $3:1$ or $1:3$ ratio of the radii of the two balls for which ${\mathcal D}_v$ has local symmetry $G_2$ provoked the question posed by G. Bor and R. Montgomery, for a `geometric' explanation of this fact. In our opinion this question would be very interesting if the two balls with these ratios were the \emph{only} two surfaces which rolling on each other `without slipping or twisting' had $G_2$ as the local symmetry group. 
The aim of the rest of the paper is to show that this is \emph{not} the case: we are able to find surfaces that roll `without slipping or twisting' on a \emph{plane} having ${\mathcal D}_v$ with local symmetry $G_2$. Thus, in view of the result we are going to present in this section, we propose to change the question of R. Bor and R. Montgomery into the following problem:

{\it Find all the pairs of surfaces which when rolling on each other `without slipping or twisting' having the velocity space as a (2,3,5) distribution ${\mathcal D}_v$ with $G_2$ as the local group of symmetries.}

\subsection{General setting}
Before passing to our examples we set the framework for the problem in the full generality, when we have two general 
surfaces $(\Sigma_1,g_1)$ and $(\Sigma_2,g_2)$.

According to Theorem \ref{main} we identify the configuration space $C(\Sigma_1,\Sigma_2)$ with the twistor space $\bbT(\Sigma_1\times\Sigma_2)$ of the manifold $M=\Sigma_1\times\Sigma_2$ with metric $g=g_1\oplus(-g_2)$. We choose the corresponding orthonormal frames $(e_1,e_2)$ on $\Sigma_1$ and $(e_3,e_4)$ on $\Sigma_2$, extend them to $M$ as it was explained below the formula (\ref{fr}), and write down the generators of the twistor distribution $\mathcal D$ as in (\ref{tdist})-(\ref{zs}). We now introduce the following notation:
$$e_i\hook\der f=:f_i,$$
which associates a lower index $i$ to a frame derivative of a function $f$ in the direction of the frame vector $e_i$, $i=1,2,3,4$, on $\bbT(M)$. With this notation, the Gaussian curvatures $\kappa$ of $g_1$ and $\lambda$ of $g_2$ are:
$$\kappa=a_{21}-a_{12}-a_1^2-a_2^2,\quad\quad \lambda=a_{43}-a_{34}-a_3^2-a_4^2.$$
Now we calculate the commutator $[\tilde{X}_1,\tilde{X}_2]$, which turns out to be 
$$[\tilde{X}_1,\tilde{X}_2]=\tilde{X}_3,$$
where
\be
\tilde{X}_3=a_1\tilde{X}_1+a_2\tilde{X}_2+(\lambda-\kappa)\partial_\phi.\label{x3}\ee
Thus, the twistor distribution is integrable if and only if the two surfaces have equal curvatures, as it was claimed. 
From now on, we will only deal with the surfaces with unequal curvatures:
$$\kappa\neq \lambda.$$
The next step is to calculate the commutators $[\tilde{X}_1,\tilde{X}_3]$ and $[\tilde{X}_2,\tilde{X}_3]$. We denote the results by 
$\tilde{X}_4$ and $\tilde{X}_5$, respectively:
$$[\tilde{X}_1,\tilde{X}_3]=\tilde{X}_4,\quad\quad[\tilde{X}_2,\tilde{X}_3]=\tilde{X}_5.$$
The (ugly) formulas for $\tilde{X}_4$ and $\tilde{X}_5$ are:
\be\begin{aligned}
\tilde{X}_4=&\Big(a_{11}+a_1\big(\frac{\kappa_1}{\lambda-\kappa}-(a_3+\frac{\lambda_4}{\lambda-\kappa})\sin\phi+(a_4-\frac{\lambda_3}{\lambda-\kappa})\cos\phi\big)\Big)\tilde{X}_1+\\
&\Big(a_{21}+a_2\big(\frac{\kappa_1}{\lambda-\kappa}-(a_3+\frac{\lambda_4}{\lambda-\kappa})\sin\phi+(a_4-\frac{\lambda_3}{\lambda-\kappa})\cos\phi\big)\Big)\tilde{X}_2-\\
&\Big(\frac{\kappa_1}{\lambda-\kappa}-(a_3+\frac{\lambda_4}{\lambda-\kappa})\sin\phi+(a_4-\frac{\lambda_3}{\lambda-\kappa})\cos\phi\Big)\tilde{X}_3+\\
&(\lambda-\kappa)\Big(\sin\phi e_3-\cos\phi e_4\Big),
\end{aligned}\label{x4}\ee
and
\be\begin{aligned}
\tilde{X}_5=&\Big(a_{12}+a_1\big(\frac{\kappa_2}{\lambda-\kappa}-(a_4-\frac{\lambda_3}{\lambda-\kappa})\sin\phi-(a_3+\frac{\lambda_4}{\lambda-\kappa})\cos\phi\big)\Big)\tilde{X}_1+\\
&\Big(a_{22}+a_2\big(\frac{\kappa_3}{\lambda-\kappa}-(a_4-\frac{\lambda_3}{\lambda-\kappa})\sin\phi-(a_3+\frac{\lambda_4}{\lambda-\kappa})\cos\phi\big)\Big)\tilde{X}_2-\\
&\Big(\frac{\kappa_2}{\lambda-\kappa}-(a_4-\frac{\lambda_3}{\lambda-\kappa})\sin\phi-(a_3+\frac{\lambda_4}{\lambda-\kappa})\cos\phi\Big)\tilde{X}_3+\\
&(\lambda-\kappa)\Big(\cos\phi e_3+\sin\phi e_4\Big).
\end{aligned}\label{x5}\ee
These equations show, in particular, that if $\kappa\neq\lambda$, 
the five vector fields $(\tilde{X}_1,\tilde{X}_2,\tilde{X}_3,$ $\tilde{X}_4,\tilde{X}_5)$, form a \emph{frame} on $\bbT(M)$, and that in such case the twistor distribution is always a $(2,3,5)$. Now, to analyze the invariants of $\mathcal D$ it is convenient to pass from the `surfaces adapted frame' $(e_1,e_2,e_3,e_4,\partial_\phi)$ on $\bbT(M)$ to the adapted frame $(\tilde{X}_1,\tilde{X}_2,\tilde{X}_3,\tilde{X}_4,\tilde{X}_5)$, and use the procedure outlined in Section \ref{invsec}. Passing to the duals $(\om_1,\om_2,\om_3,\om_4,\om_5)$ of $(\tilde{X}_1,\tilde{X}_2,\tilde{X}_3,$ $\tilde{X}_4,\tilde{X}_5)$ and considering the invariant forms $(\theta^1,\theta^2,\theta^3,\theta^4,\theta^5)$ as in (\ref{inf}) we find that the unknowns $b_{\mu\nu}$ must, in particular, satisfy the following equations:
$$\begin{aligned}
b_{11}=&b_{21}=0,\quad b_{44}=\frac{b_{12}}{b_{31}},\quad b_{45}=\frac{b_{13}}{b_{31}},\quad b_{54}=\frac{b_{22}}{b_{31}},\\
&b_{23}=\frac{b_{13}b_{22}-b_{31}^3}{b_{12}},\quad b_{55}=\frac{b_{13}b_{22}-b_{31}^3}{b_{12}b_{31}},\end{aligned}$$ 
$$\begin{aligned}
&b_{41}=
\Big(\big(4b_{13}b_{32}-4b_{12}b_{33}+3b_{31}(a_1b_{12}+a_2b_{13})\big)(\kappa-\lambda)+3b_{31}(b_{12}\kappa_2-b_{13}\kappa_1)+\\
&3b_{31}\big((b_{13}\lambda_3-b_{12}\lambda_4)\cos\phi+(b_{12}\lambda_3+b_{13}\lambda_4)\sin\phi\big)\Big)\Big(3b_{31}^2(\kappa-\lambda)\Big)^{-1}.
\end{aligned}$$ 
$$\begin{aligned}
&b_{51}=
\Big(\big(3(a_1b_{12}+a_2b_{13})b_{22}b_{31}-b_{31}^3(3a_2b_{31}+4b_{32})+4b_{22}(b_{13}b_{32}-b_{12}b_{33})\big)(\kappa-\lambda)+
\\&3b_{31}\big(b_{22}(b_{12}\kappa_2-b_{13}\kappa_1)+b_{31}^3\kappa_1+(b_{22}(b_{13}\lambda_3-b_{12}\lambda_4)-b_{31}^3\lambda_3)\cos\phi+
\\&(b_{22}(b_{12}\lambda_3+b_{13}\lambda_4)-b_{31}^3\lambda_4)\sin\phi\big)\Big)(1+
\cos\phi)\Big(6b_{12}b_{31}^2(\kappa-\lambda)\Big)^{-1}.
\end{aligned}$$ 
We have also obtained formulas for $b_{42}$, $b_{52}$ and $b_{53}$. Their length prevents us from displaying them here. 
The important thing is that the equations (\ref{carsys}), and our formulas for $b_{\mu\nu}$ implied by them, enabled us to find an explicit 
representative for the conformal class $[\tilde{g}]$ discussed in Section \ref{invsec}. We have also calculated the coefficients $A_1,A_2,A_3,A_4,A_5$ of the Cartan quartic in this general case. The formulas for them are very long and not very illuminating. So we will not display them here. Instead, we concentrate on special cases.

\subsection{Surface with one Killing vector and a surface of constant curvature}\label{killi}
To simplify the matters we consider a surface $(\Sigma_1,g_1)$ with a \emph{Killing vector} rolling on a \emph{surface}  
$(\Sigma_2,g_2)$ \emph{of constant Gaussian curvature}. 

We aim to find all pairs $(g_1,g_2)$ for which the corresponding twistor distributions $\mathcal D$ has local symmetry $G_2$.

We use the setting from the previous section. Since $g_2$ is a metric of constant curvature $\lambda$, and $g_1$ has Killing symmetry, our assumptions enable us to choose $(e_1,e_2,e_3,e_4)$ such that:
$$a_1=0,\quad a_3=0,\quad a_{21}=\kappa+a_2^2,\quad a_{22}=0,\quad a_{43}=\lambda+a_4^2,\quad a_{44}=0,\quad\der\lambda=0.$$
In the above, we have assumed that the Killing vector field of $g_1$ is the vector field $e_2$ multiplied by a suitable positive smooth function on $\Sigma_1$, so in particular $\kappa_2=0$. The metrics $g_1$ and $g_2$ read:
$$g_1=(\sigma^1)^2+(\sigma^2)^2,\quad\quad g_2=(\sigma^3)^2+(\sigma^4)^2,$$
where $(\sigma^1,\sigma^2)$, and $(\sigma^3,\sigma^4)$ are the respective duals to $(e_1,e_2)$ and $(e_3,e_4)$. The split signature metric $g$, in this setup reads:
$$g=(\sigma^1)^2+(\sigma^2)^2-(\sigma^3)^2-(\sigma^4)^2,$$
and we have
$$\der\sigma^1=0,\quad\der\sigma^2=-a_2\sigma^1\dz\sigma^2,\quad\der\sigma^3=0,\quad\der\sigma^4=-a_4\sigma^3\dz\sigma^4.$$
These assumptions enormously simplify the expression for the conformal metric $\tilde{g}$.

We have the following proposition.
\begin{proposition}\label{cocu}
The conformal $(3,2)$-signature class $[\tilde{g}]$ associated with the twistor distribution $\mathcal D$ of the twistor space $\bbT(\Sigma_1\times\Sigma_2)$ for two surfaces, the first with a Killing vector, and the second a space of constant Gaussian curvature $\lambda$,  
is represented by the metric \be\tilde{g}=\theta^1\otimes\theta^5+\theta^5\otimes\theta^1-\theta^2\otimes\theta^4-\theta^4\otimes\theta^2+\tfrac43\theta^3\otimes\theta^3\label{conf1}\ee with the basis 1-forms $(\theta^1,\theta^2,\theta^3,\theta^4,\theta^5)$ given by:
$$\theta^1=\om_4-\om_5,\quad\quad\theta_2=\om_5,\quad\quad\theta^3=-\om_3,$$
$$\begin{aligned}
\theta^4=&-\om_1+\om_2+(a_2+\frac{\kappa_1}{\lambda-\kappa})\om_3+(a_2^2+\tfrac85\kappa-\tfrac75\lambda+\tfrac{1}{10}\frac{\kappa_{11}-a_2\kappa_1}{\kappa-\lambda}-\tfrac12\frac{\kappa_1^2}{(\kappa-\lambda)^2})\om_4,\\
\theta^5=&-\om_2-(a_2+\frac{\kappa_1}{\lambda-\kappa})\om_3-(a_2^2+\tfrac{13}{10}\kappa-\tfrac{7}{10}\lambda+\tfrac{1}{10}\frac{\kappa_{11}}{\kappa-\lambda}-\tfrac12\frac{\kappa_1^2}{(\kappa-\lambda)^2})\om_4+\\&(\tfrac{3}{10}\kappa-\tfrac{7}{10}\lambda+\tfrac{1}{10}\frac{a_2\kappa_1}{\lambda-\kappa})\om_5,
\end{aligned}$$
with the basis forms $(\om_1,\om_2,\om_3,\om_4,\om_5)$, which are the duals of the vector fields (\ref{tdist})-(\ref{zs}), (\ref{x3}), (\ref{x4}), (\ref{x5}), given by:
$$
\begin{aligned}
\om_1=&\sigma^1,\\
\om_2=&(2a_2^2\kappa+2\kappa^2-a_2\kappa_1-2a_2^2\lambda-3\kappa\lambda+\lambda^2)\frac{\sigma^2}{(\kappa-\lambda)^2}+\\&(a_2^2\kappa+\kappa^2-a_2\kappa_1-a_2^2\lambda-\kappa\lambda)\sin\phi \frac{\sigma^3}{(\kappa-\lambda)^2}-\\&\big(a_2a_4(\kappa-\lambda)+(a_2^2\kappa+\kappa^2-a_2\kappa_1-a_2^2\lambda-\kappa\lambda)\cos\phi\big)\frac{\sigma^4}{(\kappa-\lambda)^2}+a_2\frac{\der\phi}{\kappa-\lambda},\\
\om_3=&\big(-a_2(\kappa-\lambda)+\kappa_1\big)\frac{\sigma^2}{(\kappa-\lambda)^2}+\kappa_1\sin\phi \frac{\sigma^3}{(\kappa-\lambda)^2}+\\&\big(a_4(\kappa-\lambda)-\kappa_1\cos\phi\big)\frac{\sigma^4}{(\kappa-\lambda)^2}-\frac{\der\phi}{\kappa-\lambda},\\
\om_4=&-\frac{\sigma^2}{\kappa-\lambda}-\sin\phi\frac{\sigma^3}{\kappa-\lambda}+\cos\phi\frac{\sigma^4}{\kappa-\lambda},\\
\om_5=&\frac{\sigma^1}{\kappa-\lambda}-\cos\phi\frac{\sigma^3}{\kappa-\lambda}-\sin\phi\frac{\sigma^4}{\kappa-\lambda}.\\
\end{aligned}
$$
\end{proposition}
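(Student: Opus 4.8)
The plan is to run, for this special pair of surfaces, the algorithm recalled in Section~\ref{invsec}, exploiting the fact that the hypotheses $a_1=0$, $a_3=0$, $\kappa_2=0$, $\der\lambda=0$, $a_{22}=a_{44}=0$ make almost every term in the general expressions of the previous subsection collapse. It was already established there that for $\kappa\neq\lambda$ the fields $(\tilde X_1,\dots,\tilde X_5)$ of (\ref{tdist}), (\ref{x3})--(\ref{x5}) form a frame on $\bbT(M)$ and that $\mathcal D$ is $(2,3,5)$, so all that remains is to compute the dual coframe, solve the algebraic system (\ref{carsys}) for the $b_{\mu\nu}$, and assemble $\tilde g$ from (\ref{conf}). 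The content of the proposition is precisely the output of this computation, together with the assertion — proved in Section~\ref{invsec} — that the resulting $[\tilde g]$ is the conformal class canonically attached to $\mathcal D$.

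First I would substitute the above data into the horizontal corrections (\ref{zs}): since the only nonzero Levi-Civita coefficients are $\Gamma^1_{~22}=a_2$ and $\Gamma^3_{~44}=a_4$, the generators (\ref{tdist}) become $\tilde X_1=e_1+\cos\phi\,e_3+\sin\phi\,e_4+a_4\sin\phi\,\partial_\phi$ and $\tilde X_2=e_2-\sin\phi\,e_3+\cos\phi\,e_4+(-a_2+a_4\cos\phi)\partial_\phi$, and the already-computed $\tilde X_3,\tilde X_4,\tilde X_5$ of (\ref{x3})--(\ref{x5}) simplify accordingly; in particular $\tilde X_3=a_2\tilde X_2+(\lambda-\kappa)\partial_\phi$, while $\tilde X_4$ and $\tilde X_5$ are, modulo ${\rm Span}(\tilde X_1,\tilde X_2,\tilde X_3)$, the independent multiples $(\lambda-\kappa)(\sin\phi\,e_3-\cos\phi\,e_4)$ and $(\lambda-\kappa)(\cos\phi\,e_3+\sin\phi\,e_4)$. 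This triangular structure makes the change-of-frame matrix from $(e_1,e_2,e_3,e_4,\partial_\phi)$ to $(\tilde X_1,\dots,\tilde X_5)$ easy to invert; dualizing against $(\sigma^1,\sigma^2,\sigma^3,\sigma^4,\der\phi)$ — using $\der\sigma^1=\der\sigma^3=0$, $\der\sigma^2=-a_2\sigma^1\dz\sigma^2$, $\der\sigma^4=-a_4\sigma^3\dz\sigma^4$, $\der(\der\phi)=0$ — then yields exactly the displayed formulas for $\om_1,\dots,\om_5$.

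Next I would feed these $\om_i$ into the ansatz (\ref{inf}) and impose (\ref{carsys}). As explained in Section~\ref{invsec}, all resulting conditions on the $b_{\mu\nu}$ are \emph{algebraic}, and the general solutions quoted there degenerate sharply under our hypotheses. Using the residual gauge freedom I would normalize, for instance, $b_{11}=b_{21}=0$, $b_{31}=-1$, $b_{12}=1$, $b_{13}=-1$, $b_{22}=0$, $b_{43}=0$; the determinant-type relations then force $b_{23}=1$, $b_{44}=-1$, $b_{45}=1$, $b_{54}=0$, $b_{55}=-1$, giving $\theta^1=\om_4-\om_5$, $\theta^2=\om_5$, $\theta^3=-\om_3$, whereas $b_{41},b_{42},b_{51},b_{52},b_{53}$ are pinned down by the remaining differential consequences of (\ref{carsys}). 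Matching $\der\theta^4,\der\theta^5$ against the right-hand sides of (\ref{carsys}) is where the second frame-derivatives of $\kappa$, notably $\kappa_{11}$, enter, producing the coefficients of $\om_3,\om_4,\om_5$ displayed in $\theta^4$ and $\theta^5$. Substituting the resulting $(\theta^1,\dots,\theta^5)$ into the boxed definition (\ref{conf}) of $\tilde g$ gives (\ref{conf1}), and the proposition follows.

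The main obstacle is not conceptual but the bulk of the bookkeeping: one must carry the $\phi$-dependent trigonometric polynomials cleanly through the commutators, the matrix inversion, and the exterior derivatives, and then recognize which combinations of $a_2,a_4,\kappa,\lambda$ and their frame derivatives survive. The existence of a consistent choice of $b_{\mu\nu}$ and auxiliary $1$-forms $\Om_1,\dots,\Om_7$ is guaranteed a priori by Cartan's theorem (Section~\ref{invsec}), so no integrability condition needs to be verified by hand; the remaining task is the finite — if long — check that the $\theta^i$ in the statement do solve (\ref{carsys}) for a suitable choice of the $\Om_A$, which we do not reproduce in full.
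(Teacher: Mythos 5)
Your proposal is correct and follows essentially the same route as the paper, which states Proposition \ref{cocu} as the outcome of exactly this computation: specialize the general formulas of the preceding subsection (the $b_{\mu\nu}$ relations, \eqref{tdist}--\eqref{zs}, \eqref{x3}--\eqref{x5}) to $a_1=a_3=\kappa_2=0$, $\der\lambda=0$, invert the frame to get the $\om_i$, and solve \eqref{carsys} for the remaining $b_{\mu\nu}$ before substituting into \eqref{conf}. Your normalization $b_{12}=1$, $b_{13}=-1$, $b_{31}=-1$, $b_{22}=b_{32}=b_{33}=b_{43}=0$ is consistent with the paper's displayed constraints (e.g.\ it reproduces $b_{23}=1$ and $b_{41}=a_2+\kappa_1/(\lambda-\kappa)$), so the only thing left unverified in both your write-up and the paper is the long but routine bookkeeping.
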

To answer the question of when $\mathcal D$ of Proposition \ref{cocu} has $G_2$ as local group of symmetries, we need only particular components of the Weyl tensor of the metric (\ref{conf1}). However, we declare that we were able to calculate the entire Weyl tensor in a manageable form. In particular we have found all the components $(A_1,A_2,A_3,A_4,A_5)$ of the Cartan's quartic in this case. They read:
\be\begin{aligned}
A_1=&A_2=\\
&10(\kappa-\lambda)^3\kappa_{1111}-70(\kappa-\lambda)^2\kappa_{111}\kappa_1-49(\kappa-\lambda)^2\kappa_{11}^2+280(\kappa-\lambda)\kappa_1^2\kappa_{11}+\\&8(\kappa-\lambda)^3(2\kappa+7\lambda)\kappa_{11}-20(\kappa-\lambda)^2(\kappa+6\lambda)\kappa_1^2-175\kappa_1^4+\\&(\kappa-\lambda)^4(\kappa-9\lambda)(9\kappa-\lambda),\\
A_3=&A_1-
10(\kappa-\la)^3a_2\kappa_{111}+\tfrac{154}{3}(\kappa-\lambda)^2a_2\kappa_{11}\kappa_1-20(\kappa-\lambda)^3a_2^2\kappa_{11}-\\&\tfrac43(\kappa-\lambda)^3(3\kappa-7\lambda)\kappa_{11}-\tfrac{140}{3}(\kappa-\lambda)a_2\kappa_1^3+\tfrac53(\kappa-\lambda)^2(21a_2^2+4\kappa-11\lambda)\kappa_1^2-\\&\tfrac43(\kappa-\lambda)^3(15a_2^2+12\kappa+7\lambda)a_2\kappa_1+\tfrac13(\kappa-\lambda)^4(\kappa-9\lambda)(9\kappa-\lambda),\\
A_4=&-2A_1+3A_3,\\
A_5=&-5A_1+6A_3+30(\kappa-\lambda)^3a_2^2\kappa_{11}-49(\kappa-\lambda)^2a_2^2\kappa_1^2+\\&2(\kappa-\lambda)^3(15a_2^2-3\kappa-28\lambda)a_2\kappa_1+(\kappa-\lambda)^4(\kappa-9\lambda)(9\kappa-\lambda).\label{eqg2}
\end{aligned}\ee
We have the following theorem.

\begin{theorem}\label{consts}
Let $(\Sigma_1,g_1)$ be a Riemann surface with Gaussian curvature $\kappa$, which has a \emph{Killing vector}, and let $(\Sigma_2,g_2)$ be a Riemann \emph{surface of constant Gaussian curvature} $\lambda$. Consider configuration space of the two surfaces rolling on each other `without slipping or twisting'. Then in order for distribution ${\mathcal D}_v$  to have local symmetry $G_2$,  the curvatures must satisfy:
\be(9\kappa-\lambda)(\kappa-9\lambda)\lambda=0.\label{waru}\ee
\end{theorem}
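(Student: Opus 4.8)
The plan is to run everything through Corollary \ref{maincor}: the distribution ${\mathcal D}_v$ has local symmetry $G_2$ exactly when the Cartan quartic of $\bbT(\Sigma_1\times\Sigma_2)$ vanishes, i.e. when $A_1=A_2=A_3=A_4=A_5=0$. First I would use the linear relations already built into (\ref{eqg2}): $A_2=A_1$ and $A_4=-2A_1+3A_3$ are automatic, and $A_5=-5A_1+6A_3+R$ with
\[R=30(\kappa-\lambda)^3a_2^2\kappa_{11}-49(\kappa-\lambda)^2a_2^2\kappa_1^2+2(\kappa-\lambda)^3(15a_2^2-3\kappa-28\lambda)a_2\kappa_1+(\kappa-\lambda)^4(\kappa-9\lambda)(9\kappa-\lambda).\]
Hence the whole $G_2$ condition is equivalent to the three equations $A_1=0$, $A_3=0$, $R=0$. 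All three, as written in (\ref{eqg2}), are independent of $\phi$, so they are genuinely conditions on $\Sigma_1$; and $A_1$ contains neither $a_2$ nor $\phi$ (it is a fourth order ODE for $\kappa$ alone), while $A_3-A_1$ and $R$ are cubic polynomials in $a_2$ whose coefficients are polynomials in $\kappa,\kappa_1,\kappa_{11},\kappa_{111}$ over $\mathbf{R}[\lambda]$ --- this is what makes an elimination feasible.

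Next I would invoke the Killing structure of \S\ref{killi}: there $a_1=0$, $\kappa_2=0$, $\der\sigma^1=0$ and $\der\sigma^2=-a_2\sigma^1\dz\sigma^2$, so $\kappa$ is a function of a single variable $r$ with $e_1=\partial_r$, and the Gauss equation $\kappa=e_1a_2-a_2^2$ becomes the Riccati constraint $a_2'=\kappa+a_2^2$, with $'=\der/\der r$. This ties $a_2$ to $\kappa$. The argument then splits. If $\kappa_1\equiv 0$ on an open set, then $\kappa_{11}=\kappa_{111}=\kappa_{1111}=0$ there, and $A_1=0$ collapses to $(\kappa-\lambda)^4(\kappa-9\lambda)(9\kappa-\lambda)=0$; since $\kappa\neq\lambda$ we get $(\kappa-9\lambda)(9\kappa-\lambda)=0$, hence (\ref{waru}) (this is the classical constant-curvature $1\!:\!9$ family together with its flat degenerations).

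If instead $\kappa_1\neq0$ on a nonempty open set $U$, the heart of the proof is to eliminate $a_2$ on $U$. One checks that the combination $3(A_3-A_1)+2R$ has no $a_2^3$ term and that its $a_2^2$-coefficient is proportional to $\kappa_1^2$, hence it is a genuine quadratic in $a_2$ over the ring generated by $\kappa,\kappa_1,\kappa_{11},\kappa_{111}$ on $U$. Reducing the cubic $R$ modulo this quadratic gives a linear equation in $a_2$; solving it and substituting back expresses $a_2=\Phi(\kappa,\kappa_1,\kappa_{11},\kappa_{111};\lambda)$ as a rational function, subject to one resultant relation $\mathrm{Res}_{a_2}(A_3-A_1,R)=0$, which is a third order ODE for $\kappa$. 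Substituting $a_2=\Phi$ into the Riccati relation $a_2'=\kappa+a_2^2$, differentiating $\Phi$ by the chain rule, and using $A_1=0$ to remove the resulting $\kappa_{1111}$, yields a second third order ODE for $\kappa$. Iterating (differentiate a third order relation, eliminate $\kappa_{1111}$ via $A_1=0$, take resultants in $\kappa_{111}$, then $\kappa_{11}$, and so on) one is left, after finitely many purely algebraic elimination steps, with a polynomial relation between $\kappa$ and $\lambda$ alone whose only branch compatible with $\kappa_1\neq0$ is $\lambda=0$. Since $\lambda$ is a constant this forces $\lambda=0$ on all of $\Sigma_2$, so (\ref{waru}) holds trivially; together with the locally-constant case this gives (\ref{waru}) at every point.

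The main obstacle is precisely the elimination of the last paragraph: it is a long symbolic computation, most naturally organized as a Gr\"obner basis / iterated-resultant calculation in $\mathbf{R}[\lambda][\kappa,\kappa_1,\kappa_{11},\kappa_{111},\kappa_{1111},a_2]$ with the Riccati relation adjoined, and one must keep track of the strata on which the denominators introduced along the way ($\kappa-\lambda$, $\kappa_1$, $a_2$, and the various leading coefficients used in the reductions) vanish, checking (\ref{waru}) directly from $A_1=A_3=R=0$ on each such stratum.
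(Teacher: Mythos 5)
Your reduction of the $G_2$ condition to the three equations $A_1=0$, $A_3-A_1=0$, $R=0$ reproduces exactly the system (\ref{eqg22}) on which the paper's proof rests, and your preliminary algebra is right: the $a_2^3$ terms of $3(A_3-A_1)$ and $2R$ cancel, and the $a_2^2$-coefficient of $3(A_3-A_1)+2R$ comes out to $7(\kappa-\lambda)^2\kappa_1^2$. From that point on, though, your elimination runs in a genuinely different order. The paper treats (\ref{eqg22}) as algebraic equations for the \emph{top derivatives} $\kappa_{11},\kappa_{111},\kappa_{1111}$, solves for them as functions of $(\kappa,\kappa_1,\lambda,a_2)$, imposes the two compatibility conditions $\der\kappa_{11}=\kappa_{111}\sigma^1$ and $\der\kappa_{111}=\kappa_{1111}\sigma^1$ (which, like your chain-rule step, silently uses the Riccati relation $e_1(a_2)=\kappa+a_2^2$), and then eliminates $\kappa_1$ from the resulting two algebraic relations to land on a single equation in $(\kappa,\lambda,a_2)$ equivalent to (\ref{waru}); the branch $a_2\equiv 0$ is discarded because it forces $\kappa\equiv 0$ and then $\kappa=\lambda=0$. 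You instead eliminate $a_2$ first via resultants and then prolong, with an explicit dichotomy $\kappa_1\equiv 0$ versus $\kappa_1\neq 0$. That dichotomy is a clarifying addition (the paper leaves it implicit): the constant-curvature branch yields $(\kappa-9\lambda)(9\kappa-\lambda)=0$ immediately, and on the non-constant branch (\ref{waru}) can only hold via $\lambda=0$, consistently with Theorem \ref{surfa}. What the paper's route buys is economy — two prolongations and one elimination, with $a_2$ carried along as a parameter rather than eliminated — whereas your route multiplies the degenerate strata (vanishing leading coefficients, denominators) that must be checked separately, as you acknowledge. In both your write-up and the paper's, the decisive symbolic step — the final resultant producing $(9\kappa-\lambda)(\kappa-9\lambda)\lambda=0$, respectively $\lambda=0$ on the locus $\kappa_1\neq 0$ — is asserted rather than exhibited, so your argument is at the same level of completeness as the published one; to make either fully verifiable one would have to display the output of the elimination.
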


\begin{proof}
According to the previous discussion ${\mathcal D}_v$ will have local symmetry $G_2$ if and only if all $A_i$s given by the equations (\ref{eqg2}) identically vanish. This means that the following three equations are necessary and sufficient:
\be\begin{aligned}
&10(\kappa-\lambda)^3\kappa_{1111}-70(\kappa-\lambda)^2\kappa_{111}\kappa_1-49(\kappa-\lambda)^2\kappa_{11}^2+280(\kappa-\lambda)\kappa_1^2\kappa_{11}+\\&8(\kappa-\lambda)^3(2\kappa+7\lambda)\kappa_{11}-20(\kappa-\lambda)^2(\kappa+6\lambda)\kappa_1^2-175\kappa_1^4+\\&(\kappa-\lambda)^4(\kappa-9\lambda)(9\kappa-\lambda)=0,\\&\\
&-
10(\kappa-\la)^3a_2\kappa_{111}+\tfrac{154}{3}(\kappa-\lambda)^2a_2\kappa_{11}\kappa_1-20(\kappa-\lambda)^3a_2^2\kappa_{11}-\\&\tfrac43(\kappa-\lambda)^3(3\kappa-7\lambda)\kappa_{11}-\tfrac{140}{3}(\kappa-\lambda)a_2\kappa_1^3+\tfrac53(\kappa-\lambda)^2(21a_2^2+4\kappa-11\lambda)\kappa_1^2-\\&\tfrac43(\kappa-\lambda)^3(15a_2^2+12\kappa+7\lambda)a_2\kappa_1+\tfrac13(\kappa-\lambda)^4(\kappa-9\lambda)(9\kappa-\lambda)=0,\\&\\
&30(\kappa-\lambda)^3a_2^2\kappa_{11}-49(\kappa-\lambda)^2a_2^2\kappa_1^2+\\&2(\kappa-\lambda)^3(15a_2^2-3\kappa-28\lambda)a_2\kappa_1+(\kappa-\lambda)^4(\kappa-9\lambda)(9\kappa-\lambda)=0.
\end{aligned}\label{eqg22}\ee
One can view these equations as \emph{algebraic} equations on $\kappa_{1111}$, $\kappa_{111}$ and $\kappa_{11}$, and as such they may be easily solved. However, the solutions $\kappa_{1111}=\kappa_{1111}(\kappa,\kappa_1,\lambda,a_2)$,  $\kappa_{111}=\kappa_{111}(\kappa,\kappa_1,\lambda,a_2)$,  $\kappa_{11}=\kappa_{11}(\kappa,\kappa_1,\lambda,a_2)$, have to satisfy equations
$$\der \kappa_{11}=\kappa_{111}\sigma^1\quad{\rm and}\quad\der\kappa_{111}=\kappa_{1111}\sigma^1.$$
This introduces two additional algebraic equations involving the four variables $\kappa,\kappa_1,\lambda$ and $a_2$. Elimination of $\kappa_1$ from these two equations reduces them to a single algebraic equation for $\kappa,\lambda$ and $a_2$, which after a simplification, and exclusion of the possibility in which $a_2\equiv 0$ yields the necessary condition $(9\kappa-\lambda)(\kappa-9\lambda)\lambda=0$. The case $a_2\equiv 0$ must be excluded because otherwise, $\kappa\equiv 0$, and the equations (\ref{eqg22}) reduce to $\kappa=\lambda=0$. 
\end{proof}
We now have the corollary confirming the result of Zelenko-Bryant-Bor-Mont\-gom\-ery:
\begin{corollary}
Two surfaces of constant Gaussian curvature rolling on each other `without slipping or twisting' have ${\mathcal D}_v$ with local symmetry $G_2$ if and only if the ratio of their curvatures is $1:9$ or $9:1$  
\end{corollary}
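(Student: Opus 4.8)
The plan is to obtain the corollary directly from the closed-form expressions \nn{eqg2} for the Cartan quartic coefficients $(A_1,\dots,A_5)$ established in Section~\ref{killi}. First I would observe that the setting of Section~\ref{killi} does apply here: a surface $(\Sigma_1,g_1)$ of constant Gaussian curvature carries a Killing vector (indeed a three-dimensional isometry algebra), so one may adopt the orthonormal frame normalization of Section~\ref{killi} ($a_1=a_3=0$, $a_{22}=a_{44}=0$, $\der\lambda=0$, $\kappa_2=0$), for which Proposition~\ref{cocu} and the formulas \nn{eqg2} hold verbatim. The new input is that $\kappa$ is now \emph{also} constant, so every frame derivative of $\kappa$ vanishes,
$$\kappa_1=\kappa_{11}=\kappa_{111}=\kappa_{1111}=0.$$

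Substituting this into \nn{eqg2}, every monomial containing a derivative of $\kappa$ drops out — and with it every $a_2$-dependent term, since those all carry such a derivative — so that each $A_i$ reduces to a fixed numerical multiple of the single polynomial
$$P(\kappa,\lambda):=(\kappa-\lambda)^4(\kappa-9\lambda)(9\kappa-\lambda);$$
concretely $A_1=A_2=P$, $A_3=\tfrac43 P$, $A_4=2P$, $A_5=4P$, so the Cartan quartic $C(\zeta)$ is $P$ times a fixed nonzero polynomial in $\zeta$. By Corollary~\ref{maincor}, together with the theorem of Cartan recorded at the end of Section~\ref{invsec} (vanishing of all $A_i$ is necessary \emph{and} sufficient for $(\bbT(M),[\tilde{g}])$ to be conformally flat, hence for ${\mathcal D}_v$ to be locally diffeomorphic to ${\mathcal D}_{G_2}$), it follows that ${\mathcal D}_v$ has local symmetry $G_2$ if and only if $P(\kappa,\lambda)=0$.

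It then remains to read off the arithmetic. By \nn{x3}, two constant-curvature surfaces give an integrable distribution precisely when $\kappa=\lambda$, so in the $(2,3,5)$ regime $\kappa\neq\lambda$ and the factor $(\kappa-\lambda)^4$ is nonzero; hence $P=0$ is equivalent to $(\kappa-9\lambda)(9\kappa-\lambda)=0$, that is $\kappa=9\lambda$ or $\lambda=9\kappa$, which is exactly the assertion that the ratio of the curvatures is $9:1$ or $1:9$. (Neither curvature can vanish in this case, for $\kappa=9\lambda$ together with $\lambda=0$ would force $\kappa=\lambda=0$, contradicting $\kappa\neq\lambda$.) Specializing to $\Sigma_i=\bbS^2_{r_i}$, where $\kappa=r_1^{-2}$ and $\lambda=r_2^{-2}$, this is the ratio of radii $r_1:r_2=3:1$ or $1:3$, recovering the Zelenko--Bryant--Bor--Montgomery theorem.

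I do not expect a genuine obstacle: the heavy step — producing the manageable closed form \nn{eqg2} of the Cartan quartic — has already been carried out in Proposition~\ref{cocu} and the computation following it, and the corollary is just the specialization obtained by killing all derivatives of $\kappa$ and dividing out the nonvanishing factor $(\kappa-\lambda)^4$. The only points deserving a sentence of justification are that a constant-curvature surface really does satisfy the Killing-vector hypothesis of Section~\ref{killi} (so that \nn{eqg2} is applicable), and the appeal to the sufficiency half of Cartan's theorem, which guarantees that there is no further, independent obstruction to $G_2$ symmetry beyond the vanishing of the $A_i$.
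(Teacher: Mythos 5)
Your proposal is correct and takes essentially the same route as the paper: the paper's own proof likewise sets $\kappa_1=\kappa_{11}=\kappa_{111}=\kappa_{1111}=0$ in \nn{eqg2}, finds that modulo a nonvanishing factor the Cartan quartic becomes $(\kappa-9\lambda)(9\kappa-\lambda)(\kappa-\lambda)^4(1+2\zeta+2\zeta^2)^2$ (matching your $A_1=A_2=P$, $A_3=\tfrac43P$, $A_4=2P$, $A_5=4P$), and concludes using $\kappa\neq\lambda$ to exclude the integrable case. Your added remarks on the applicability of the Killing-vector normalization and on the sufficiency half of Cartan's criterion are sound but not a different method.
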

\begin{proof}
Obviously $\kappa=9\lambda$ and $\kappa=\tfrac19\lambda$ solve (\ref{waru}). But instead of looking into the integrability conditions it is better now to use (\ref{eqg2}) to write down the Cartan quartic in this case. Of course now, since $\kappa={\rm const}$, we use equations (\ref{eqg2}) with $\kappa_{1111}=\kappa_{111}=\kappa_{11}=\kappa_1=0$. Inserting this into (\ref{eqg2}), and using the definition (\ref{cq}) of the Cartan quartic we find that, modulo a nonvanishing factor, the Cartan quartic is: 
\be C(\zeta)=(\kappa-9\lambda)(9\kappa-\lambda)(\kappa-\lambda)^4(1+2\zeta+2\zeta^2)^2.\label{qc}\ee
Excluding the integrable case, this is identically zero if and only if $\kappa=9\lambda$ or $\kappa=\tfrac19\lambda$, as claimed.
\end{proof}
\begin{remark}
Note that if $\lambda>0$ the surfaces described by the corollary are spheres with the ratio of the radii $1:3$ or $3:1$. But we can also have two hyperboloids with $\lambda<0$ here.
\end{remark}
\begin{remark}
Also note that if $(\kappa-9\lambda)(9\kappa-\lambda)\neq 0$ the Cartan quartic has always \emph{two distinct double} roots. In the terminology of Ref. \cite{franco}, the root type of the Cartan quartic is $[2,2]$. According to Cartan, in such a case, the dimension of the local symmetry group of the corresponding distribution $\mathcal D$ can not be larger than 6. It is easy to think of examples where symmetry group of ${\mathcal D}_v$ has the maximal dimension 6, since the distribution of the two-sphere system has always symmetry $\sog(3)\times\sog(3)$, and the distribution of the two-hyperboloid system has always symmetry $\sog(1,2)\times\sog(1,2)$. 
\end{remark}
\subsection{$G_2$ and surfaces of revolution rolling on the plane}\label{revo}
We now pass to the analysis of the still \emph{open} possibility $\lambda=0$ in (\ref{waru}). It turns out that in this case we
can obtain several examples of surfaces that roll `without slipping or twisting' on the \emph{plane} with velocity space ${\mathcal D}_v$ that has symmetry $G_2$.   

We have the following theorem.
\begin{theorem}\label{surfa}
Modulo homotheties there are only three metrics corresponding to surfaces with a Killing vector, which when rolling on the plane $\bbR^2$ `without slipping or twisting', have the distribution ${\mathcal D}_v$ with local symmetry $G_2$. 
These metrics in a convenient coordinate system can be written as
\be\boxed{\begin{aligned}
g_{1o}=&\rho^4\der\rho^2+\rho^2\der\varphi^2,\\
g_{1+}=&(\rho^2+1)^2\der \rho^2+\rho^2\der\varphi^2,\\
g_{1-}=& (\rho^2-1)^2\der \rho^2+\rho^2\der\varphi^2,
\end{aligned}}\label{three}
\ee
or, collectively as:
$$\boxed{
g_1=(\rho^2+\epsilon)^2\der \rho^2+\rho^2\der\varphi^2,\quad{\rm where}\quad\epsilon=0,\pm1.}$$
Their curvature is given by
\be
\kappa=\frac{2}{(\rho^2+\epsilon)^3}.\label{gauss}\ee   
\end{theorem}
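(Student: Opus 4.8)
The plan is to carry the computation of Section~\ref{killi} one step further, into the case $\lambda\equiv 0$ left open by Theorem~\ref{consts}, i.e. when $\Sigma_2$ is locally the Euclidean plane. Setting $\lambda=0$ in the quartic coefficients (\ref{eqg2}), the identities $A_2=A_1$ and $A_4=3A_3-2A_1$ persist, so the whole Cartan quartic vanishes if and only if
\[
A_1=0,\qquad A_3-A_1=0,\qquad A_5-6A_3+5A_1=0 .
\]
Each of these three equations is \emph{linear} in exactly one of the top frame derivatives it involves --- in $\kappa_{1111}$, in $\kappa_{111}$, and in $\kappa_{11}$ respectively --- with nonvanishing leading coefficient (a power of $\kappa$, times $a_2^2$ in the last case). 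Solving the third equation for $\kappa_{11}$, then the second for $\kappa_{111}$, then the first for $\kappa_{1111}$, expresses all three higher derivatives as explicit rational functions of $\kappa$, $\kappa_1$ and $a_2$.

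I would then introduce an arc-length coordinate $t$ along $e_1$, so that $\sigma^1=\der t$, $e_1=\partial_t$, $\kappa_1=\dot\kappa$, $\kappa_{11}=\ddot\kappa$, $\kappa_{111}=\dddot\kappa$; since $\kappa$ is the Gaussian curvature of $g_1$ and $g_1$ carries a Killing field one has $\kappa_2=\kappa_3=\kappa_4=0$, and the normalizations fixed in Section~\ref{killi} give $\dot a_2=a_{21}=\kappa+a_2^2$. The formula $\ddot\kappa=F(\kappa,\dot\kappa,a_2)$ from the previous step is then a second order ODE, and differentiating $F$ by $e_1$ --- using $e_1\kappa=\kappa_1$, $e_1\kappa_1=\kappa_{11}=F$, $e_1 a_2=\kappa+a_2^2$ --- and equating the result to the expression already obtained for $\kappa_{111}$, and similarly one derivative higher for $\kappa_{1111}$, yields two polynomial integrability conditions $P_1(\kappa,\kappa_1,a_2)=0$ and $P_2(\kappa,\kappa_1,a_2)=0$. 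This is the same mechanism as in the proof of Theorem~\ref{consts}, with one difference: there the resultant eliminating $\kappa_1$ collapsed to $(9\kappa-\lambda)(\kappa-9\lambda)\lambda=0$, which is vacuous when $\lambda=0$, so here one must work with $P_1$ and $P_2$ directly. Eliminating $\kappa_1$ between them produces a polynomial relation $Q(\kappa,a_2)=0$; the branch $a_2\equiv0$ is discarded, since $\dot a_2=\kappa+a_2^2$ then forces $\kappa\equiv0$, i.e. $\kappa=\lambda=0$ and ${\mathcal D}_v$ integrable by (\ref{x3}), contrary to the standing $(2,3,5)$ assumption; on the surviving branches $Q=0$ together with $P_1=0$ determine $\dot\kappa$ algebraically as a function of $\kappa$ and $a_2$.

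The last step is to integrate this. On the relevant locus $\dot a_2=\kappa+a_2^2\neq0$, so $a_2$ may be used as the independent variable and the relations reduce to a single first order ODE for $\kappa=\kappa(a_2)$, which I expect to solve in closed form. It is cleaner to phrase the integration through $h:=\sqrt{g_{\varphi\varphi}}$: writing $g_1=\der t^2+h(t)^2\,\der\varphi^2$ with Killing field proportional to $\partial_\varphi$, one has $a_2=-\dot h/h$ and $\kappa=-\ddot h/h$, and the reduced system should integrate to $\dot h=(c\,h^2+\epsilon)^{-1}$, with $c>0$ free --- which is exactly the homothety freedom of Remark~\ref{homo}, since rescaling the flat metric leaves $\Sigma_2$ isometric to itself --- and with $\epsilon$ forced by the residual integrability conditions to the three values $0$, $\pm1$. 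Passing to the coordinate $\rho:=h$, in which $\der t=(c\rho^2+\epsilon)\,\der\rho$, the metric becomes $g_1=(c\rho^2+\epsilon)^2\der\rho^2+\rho^2\der\varphi^2$; normalizing $c=1$ by a homothety yields precisely (\ref{three}), while (\ref{gauss}) follows from the direct computation $\kappa=-\ddot h/h=2/(\rho^2+\epsilon)^3$. To finish, I would substitute each of the three curvature functions, together with its iterated $e_1$-derivatives, back into (\ref{eqg2}) and verify $A_1=\cdots=A_5\equiv0$; this confirms that all three metrics really do give ${\mathcal D}_v$ with symmetry $G_2$, so they are precisely --- not merely necessarily --- the solutions.

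The step I expect to be the genuine obstacle is this elimination-and-integration core: extracting a usable $Q(\kappa,a_2)$ from the very bulky $\lambda=0$ specializations of (\ref{eqg2}), keeping the over-determined system consistent throughout, and then recognizing that the surviving first order equation integrates explicitly with the shape constant $\epsilon$ pinned to exactly three values. Setting up the over-determined system and disposing of the degenerate branch $a_2\equiv0$, by contrast, is routine given the machinery already developed in Section~\ref{killi}.
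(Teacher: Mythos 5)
Your overall architecture---specialize (\ref{eqg2}) to $\lambda=0$, read the three surviving equations as an overdetermined ODE system along $e_1$, prolong by cross-differentiation, integrate, and confirm sufficiency by back-substitution---is the paper's strategy, and your final normal form $g_1=(c\rho^2+\epsilon)^2\der\rho^2+\rho^2\der\varphi^2$ with $\kappa=2c/(c\rho^2+\epsilon)^3$ is the right answer. But the central elimination step fails as you describe it. When $\lambda=0$ the two prolongation conditions $P_1$ and $P_2$ are \emph{not} independent: the paper finds a \emph{unique} compatibility condition, its equation (\ref{adam}), which in your variables is exactly $a_2\kappa_1=3\kappa^2$; the second prolongation adds nothing. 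Hence the resultant of $P_1$ and $P_2$ with respect to $\kappa_1$ vanishes identically and there is no nontrivial $Q(\kappa,a_2)=0$. This is forced by a dimension count: the full solution set is the continuous two-parameter family (modulo translation) $g_1=(\alpha\rho^2+\beta)^2\der\rho^2+\rho^2\der\varphi^2$, and along these solutions $\kappa/a_2^2=2-2\beta(2\alpha)^{-1/3}\kappa^{1/3}$, so the $(\kappa,a_2)$-trajectories sweep out an open region of the plane and no fixed polynomial relation between $\kappa$ and $a_2$ can hold on all of them. For the same reason your claim that $\epsilon$ is ``forced by the residual integrability conditions to the three values $0,\pm1$'' is wrong: $\epsilon$ (the paper's $\beta$) is a free constant of integration, and the three values are homothety normal forms of a continuous parameter, obtained only after quotienting by the scalings of Remark \ref{homo} together with rescaling of the Killing coordinate $\varphi$.

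What survives of your plan is that the single condition $P_1=0$ does determine $\kappa_1=3\kappa^2/a_2$, giving the autonomous system $\dot\kappa=3\kappa^2/a_2$, $\dot a_2=\kappa+a_2^2$; but this has a one-parameter family of trajectories, not finitely many, so the ``single first order ODE for $\kappa(a_2)$ determined by $Q=0$'' that you propose to integrate does not exist. Two further ingredients of the paper's proof are absent from your outline. First, sufficiency: a priori the third-order compatibility ODE (\ref{adam}) is only necessary, and one must verify (the paper's ``first miracle'') that it implies all three equations (\ref{eqg22}); your concluding back-substitution of the three normal forms does repair this for the theorem as stated. Second, the integration device: the paper linearizes (\ref{adam}) by the reciprocity transformation $x=x(\rho)$, obtaining $x'''\rho^2+x''\rho-x'=0$ with general solution $x=\tfrac12\alpha\rho^2+\beta\log\rho+\gamma$, from which the three homothety classes follow according to the sign of $\alpha\beta$. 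Without some such device, ``which I expect to solve in closed form'' is precisely the content that is missing.
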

\begin{proof}
If $\lambda=0$ and $(\Sigma_1,g_1)$ is a surface with a Killing vector $K=\partial_\varphi$, we can introduce local coordinate systems $(x,y)$ on $\Sigma_1$ and $(u,v)$ on $\Sigma_2$ such that
$$\sigma^1=\rho(x) \der x,\quad\sigma^2=\rho(x)\der \varphi,\quad \sigma^3=\der u,\quad\sigma^4=\der v.$$
Then the 4-metric reads:
$$g=\rho(x)^2(\der x^2+\der \varphi^2)-\der u^2-\der v^2,$$
and the variables from the equations (\ref{eqg22}) needed for the $G_2$ symmetry are given by:
$$a_2=-\frac{\rho'}{\rho^2},\quad\kappa=\frac{{\rho'}^2-\rho''\rho}{\rho^4}.$$
The only relevant coframe derivative is given by:
$$\partial_1=\frac{1}{\rho}\partial_x.$$
One can now write down the equations (\ref{eqg22}) in this setting. They look ugly, and they all involve the derivatives of the function $\rho$ up to the sixth order. We treated these equations as algebraic equations for $\rho^{(6)}$, $\rho^{(5)}$ and $\rho^{(4)}$, and used the same trick as in the proof of Theorem \ref{consts}. Namely, we algebraically solved equation for $\rho^{(4)}$, differentiated it, and compared it with the $\rho^{(5)}$ obtained algebraically. Then we did the same for $\rho^{(5)}$ and $\rho^{(6)}$. This produced a unique compatibility condition, obviously of the third order in the derivatives of $\rho$, which reads:
\be
\rho^{(3)}\rho'\rho^2-3{\rho''}^2\rho^2+\rho''{\rho'}^2\rho+{\rho'}^4=0.\label{adam}\ee
Now two miracles has happened: It turns out that 
\begin{itemize}
\item the equation (\ref{adam}) is not only necessary but also sufficient for making the Cartan quartic vanishing, and also
\item the equation (\ref{adam}), despite its ugly look, is completely solvable by means of elementary functions\footnote{We thank Adam Szereszewski \cite{aszer} for showing us the explicit transformation from equation (\ref{adam}) to (\ref{ada1}).}.
\end{itemize}
The first claim can be easily checked by solving (\ref{adam}) algebraically for $\rho^{(3)}$ and inserting it, together with its three consecutive derivatives into equations (\ref{eqg22}). These, with such $\rho^{(3)}$, become identities $0=0$. The second claim is justified by making a reciprocity transformation for the variables $x$ and $\rho$. It is an elementary calculation, that the function $\rho=\rho(x)\neq 0$ satisfies equation (\ref{adam}) if and only if $x=x(\rho)$ satisfies a \emph{linear} 3rd order ODE:
\be
x'''\rho^2+x''\rho-x'=0.\label{ada1}\ee
This can be easily solved yielding
$$x=\tfrac12 \alpha \rho^2+\beta \log \rho+\gamma$$
as its most general solution. Here $\alpha,\beta,\gamma$ are real constants. Inserting this general solution into the metric of the surface $\Sigma_1$ we get
$$g_1=\rho^2 (\der (\alpha \rho^2+\beta \log \rho+\gamma))^2+\rho^2\der \varphi^2=(\beta+\alpha \rho^2)^2\der \rho^2+\rho^2\der \varphi^2.$$
We have to exclude here the case when $\alpha=0$, since in this case $g_1$ is flat. If $\alpha\neq 0$ 
metrics $g_1$ are homothetic to one of the metrics (\ref{three}). 
In particular, all metrics $g_1$ with $\beta=0$ are homothetic to $g_{1o}$. If $\beta\neq0$ all metrics $g_1$ for which $\alpha\beta>0$ are homothetic to $g_{1+}$, and if $\alpha\beta<0$ the metrics $g_1$ are homothetic to $g_{1-}$. 

Calculating the Gauss curvature for the metrics $g_1$ above we get
$$\kappa=\frac{2\alpha}{(\beta+\alpha\rho^2)^3},$$
which reduces to (\ref{gauss}) for the three homothety non-equivalent classes of metrics $g_{1+}$, $g_{1-}$ and $g_{1o}$ . 
\end{proof}
Now the problem of isometric embedding of metrics (\ref{three}) in flat 
$\bbR^3$ arises. We have the following theorem.
\begin{theorem}\label{emb}
Let $\mathcal U$ be a region of one of Riemann surfaces $(\Sigma_1,g_1)$ of Theorem \ref{surfa}, in which the curvature $\kappa$ is nonnegative. In the case $\epsilon=+1$, such a region can be isometrically embedded in flat $\bbR^3$ as a surface of revolution. The embedded surface, when written in the Cartesian coordinates $(X,Y,Z)$ in $\bbR^3$, is algebraic, with the embedding given by
$$\boxed{(X^2+Y^2+2)^3-9Z^2=0, \quad\quad \epsilon=+1.}$$
In the case $\epsilon=-1$, one can find an isometric embedding in $\bbR^3$ of a portion of $\mathcal U$ given by $\varphi\in[0,2\pi[$, $\rho\geq\sqrt{2}$. This embedding gives another surface of revolution which is also algebraic, and in the Cartesian coordinates $(X,Y,Z)$, given by 
$$\boxed{(X^2+Y^2-2)^3-9Z^2=0,\quad\quad\epsilon=-1.}
$$
In the case $\epsilon=0$, one can embed a portion of $\mathcal U$ with $\rho\geq 1$ in $\bbR^3$ as a surface of revolution 
$$\boxed{Z=f(\sqrt{X^2+Y^2}),\quad{\rm with}\quad f(t)=\int_{\rho=1}^t \sqrt{\rho^4-1}~\der\rho.}
$$ 
\end{theorem}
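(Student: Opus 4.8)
The plan is to realize the embedded surface by the standard surface-of-revolution ansatz and thereby reduce the whole problem to a single first-order ODE for the profile. Write a surface of revolution about the $Z$-axis as $(X,Y,Z)=(r(\rho)\cos\varphi,\,r(\rho)\sin\varphi,\,z(\rho))$; its first fundamental form is $\big(r'(\rho)^2+z'(\rho)^2\big)\der\rho^2+r(\rho)^2\der\varphi^2$. Matching the $\der\varphi^2$-coefficient with the metric $g_1=(\rho^2+\epsilon)^2\der\rho^2+\rho^2\der\varphi^2$ from Theorem \ref{surfa} forces $r(\rho)=\rho$ --- so that $\sqrt{X^2+Y^2}=\rho$ on the image --- and matching the $\der\rho^2$-coefficients leaves the single equation
\[
z'(\rho)^2=(\rho^2+\epsilon)^2-1 .
\]
Thus the theorem comes down to two elementary steps: determining the set of $\rho$ on which the right-hand side is nonnegative, and integrating $z'(\rho)=\sqrt{(\rho^2+\epsilon)^2-1}$ on that set (the branch and the additive constant being irrelevant up to a rigid motion of $\mathbb{R}^3$).

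Next I would go through the three values of $\epsilon$. For $\epsilon=+1$ one has $(\rho^2+1)^2-1=\rho^2(\rho^2+2)\geq 0$ for every $\rho$, so $z'(\rho)=\rho\sqrt{\rho^2+2}$ integrates to $z(\rho)=\tfrac13(\rho^2+2)^{3/2}$; squaring and substituting $\rho^2=X^2+Y^2$, $Z=z$ yields $(X^2+Y^2+2)^3-9Z^2=0$. For $\epsilon=-1$ one has $(\rho^2-1)^2-1=\rho^2(\rho^2-2)$, which is nonnegative exactly when $\rho\geq\sqrt2$ (the alternative $\rho^2-1\leq-1$ giving only the degenerate point $\rho=0$), and there $z(\rho)=\tfrac13(\rho^2-2)^{3/2}$, giving $(X^2+Y^2-2)^3-9Z^2=0$. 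For $\epsilon=0$ one has $\rho^4-1\geq0$ iff $\rho\geq1$, and $z(\rho)=\int_{1}^{\rho}\sqrt{t^4-1}\,\der t$, which is not an elementary function, so the embedding is recorded in the form $Z=f(\sqrt{X^2+Y^2})$ with $f(t)=\int_{\rho=1}^{t}\sqrt{\rho^4-1}\,\der\rho$. In each case the embedded surface automatically carries the curvature $\kappa=2/(\rho^2+\epsilon)^3$ of (\ref{gauss}), since isometric embeddings preserve Gaussian curvature; this is a convenient cross-check rather than an independent computation.

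The one point that needs care --- and is really the only content beyond a one-line integration --- is the bookkeeping of domains. The hypothesis $\kappa\geq0$ on $\mathcal U$ only says $\rho^2+\epsilon>0$, whereas solvability of the profile ODE demands the strictly stronger $(\rho^2+\epsilon)^2\geq1$. This is exactly why, for $\epsilon=-1$, the embeddable portion of $\mathcal U$ collapses to $\rho\geq\sqrt2$ even though $\kappa>0$ already for $\rho>1$, and why for $\epsilon=0$ one must take $\rho\geq1$. So the ``hard part'' is simply to state precisely which sub-region is being embedded in each of the three cases, to pick the branch $z'\geq0$ consistently, and to choose the vertical placement so that the resulting profile reproduces the boxed equations; no further analytic difficulty arises.
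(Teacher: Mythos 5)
Your proposal is correct and follows essentially the same route as the paper: the same surface-of-revolution ansatz $X=\rho\cos\varphi$, $Y=\rho\sin\varphi$, $Z=\int\sqrt{(\rho^2+\epsilon)^2-1}\,\der\rho$, the same elementary integrations $\tfrac13(\rho^2\pm2)^{3/2}$ for $\epsilon=\pm1$ and the elliptic integral for $\epsilon=0$, and the same domain restrictions $\rho\geq\sqrt2$ and $\rho\geq1$. The only difference is that you make explicit the step forcing $r(\rho)=\rho$ from the $\der\varphi^2$-coefficient, which the paper leaves implicit.
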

\begin{figure}[htb]
\begin{center}
\includegraphics[scale=0.45]{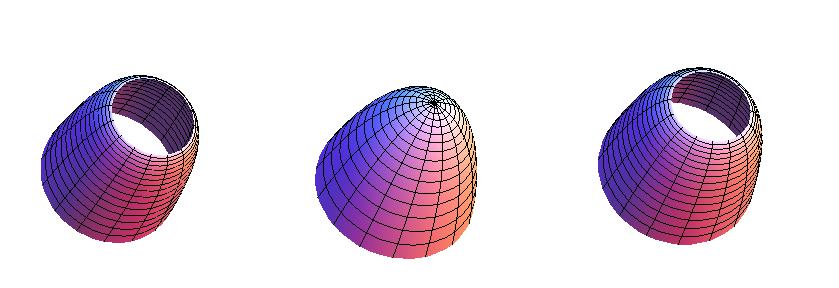}
\caption{The Mathematica print of the three surfaces of revolution, whose induced metric from $\bbR^3$ is given, from left to right, by respective metrics $g_{1-}$, $g_{1+}$ and $g_{1o}$. The middle figure embeds all $(\Sigma_1,g_{1+})$. In the left figure only the portion of $(\Sigma_1,g_{1-})$ with \emph{positive} curvature is embedded, and in the right figure only points of $(\Sigma_1,g_{1o})$ with $\rho>1$ are embedded. It is why the left and right figures have holes on the top. All three surface, when rolling on a plane `without twisting or slipping' have velocity space ${\mathcal D}_v$ with symmetry $G_2$.}
\end{center}
\end{figure}
\begin{proof}
In Theorem \ref{surfa}  we have proven that we have three cases of metrics corresponding to surfaces which when rolling `without slipping or twisting' on the plane have $G_2$ as a symmetry of ${\mathcal D}_v$. To embed them in $\bbR^3$ as surfaces of revolution, we put:
\be \boxed{X=\rho \cos\varphi,\quad Y=\rho\sin\varphi,\quad Z=\int \sqrt{(\rho^2+\epsilon)^2-1}~\der\rho, \quad{\rm where}\quad\epsilon=0,\pm1.}\label{mysurf}\ee  
If $\epsilon=1$, the function under the square root in the integral is positive for all $\rho\geq 0$, and this embeds the entire $(\Sigma_1,g_{1+})$ in $\bbR^3$. Actually the integral is elementary in this case, $\int \sqrt{(\rho^2+1)^2-1}~\der\rho=\tfrac13(\rho^2+2)^{3/2}$, and this embeds $(\Sigma_1,g_{1+})$ as an algebraic surface
$$(X^2+Y^2+2)^3-9Z^2=0$$
in $\bbR^3$. 
If $\epsilon=-1$ the integral for $Z$ above is only meaningful for $\rho\geq\sqrt{2}$. So only the region $\varphi\in[0,2\pi[$, $\rho\geq\sqrt{2}$ of $\Sigma_1$ can now be embedded in this way. In this case the integral for $Z$ is also elementary, $\int \sqrt{(\rho^2-1)^2-1}~\der\rho=\tfrac13(\rho^2-2)^{3/2}$. This embeds the above mentioned portion of $(\Sigma_1,g_{1-})$ as an algebraic surface (of revolution) in flat $\bbR^3$ given by:
$$(X^2+Y^2-2)^3-9Z^2=0.$$
If $\epsilon=0$ the embedding is given by (\ref{mysurf}) with $Z$ being an elliptic integral $Z=\int \sqrt{\rho^4-1}~\der\rho$. It now embeds the portion $\varphi\in[0,2\pi[$, $\rho>1$ of $(\Sigma_1,g_{1o})$ in flat $\bbR^3$. 
\end{proof}

The three surfaces of revolution defined in the Theorem \ref{emb}, and depicted in Figure 1 are examples of surfaces which when rolling on the plane `without slipping or twisting' have the velocity space ${\mathcal D}_v$ with $G_2$ as the group of local symmetries. Other isometric embeddings of these surfaces in $\bbR^3$ may provide other examples. For example it would be very instructive to find an isometric embedding in $\bbR^3$ of the positive curvature region $0<\rho\leq 1$, $0\leq\varphi<2\pi$ of the metric $g_{1o}=\rho^4\der\rho^2+\rho^2\der\varphi^2$, or of the positive curvature region $1<\rho<\sqrt{2}$ of the metric $g_{1-}=(\rho^2-1)^2\der\rho^2+\rho^2\der\varphi^2$. 
\begin{remark}
Although the region of negative curvature of the Riemann surface $(\Sigma_1,g_{1-})$ 
\begin{figure}[htb]
\begin{center}
\includegraphics[scale=0.45]{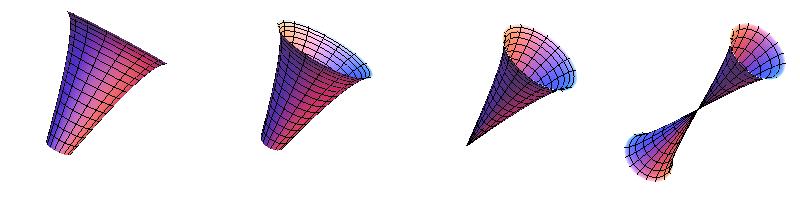}
\caption{The first two figures on the left give the Mathematica print of negative curvature portion $\varphi\in[0,2\pi[$, $\rho\in[0.5,2]$ of the Riemann surface with the metric $g_1=(\rho^2-5)^2\der\rho^2+\rho^2\der\varphi^2$ embedded as a surface of revolution. The metric $g_1\oplus(-\der u^2-\der v^2)$ has twistor distribution $\mathcal D$ with $G_2$ symmetry. The third figure is the print of the embedding for the $\rho$ range: $\rho\in[0,2]$. One sees that the embedding degenerates at $\rho=0$. The last picture gives the embedding for the maximally extend $\rho$ range, $\rho\in[-2,2]$.}
\end{center}
\end{figure}
can not be simply embedded as a surface of revolution in $\bbR^3$, we can find such an embedding for a portion of negative curvature region with a metric $g_1=(\rho^2-5)^2\der\rho^2+\rho^2\der\varphi^2$, which is homothetically equivalent to $g_{1-}$. As it is evident from the proof of Theorem \ref{surfa} this metric is in the class of metrics $g_1$, which together with the flat metric $g_2=\der u^2+\der v^2$, form the split signature metric $g=g_1\oplus(-g_2)$ with twistor distribution having symmetry $G_2$.  For this metric we consider a negative curvature region given by $\varphi\in[0,2\pi[$, $\rho\in]0,2[$, and we isometrically embed it in $\bbR^3$ via 
$$X=\rho\cos\varphi,\quad Y=\rho\sin\varphi,\quad Z=\int_0^\rho\sqrt{(x^2-6)(x^2-4)}\der x.$$ 
The embedding is not well defined in $\rho=0$, and the integral is not elementary. Nevertheless we can plot the obtained surface of revolution, which is depicted in Figure 2.
\end{remark}
\section{Acknowledgments}
This paper would never has been initiated if C. Denson (Denny) Hill, invited the second author (P. N.) to Stony Brook. Simply: without Denny's invitation we would never have met.

The paper owes much to the twistorial ideas of Roger Penrose, which were transmitted to us during many lectures of Andrzej Trautman at Physics Department of Warsaw University. In these lectures A. T. 
emphasized the role of the totally null planes in 4-dimensional geometry. Several times in his lectures, he underlined the differences of the geometry of totally null planes induced by metrics of different signatures. While learning about the main difference between the Riemannian or Lorentzian signature and the split signature, the split signature case became interesting for us. We took this difference as the main theme of this paper due to Andrzej Trautman. 

We also gratefully acknowledge that this work would never have been completed unless P. N. was invited for a lecture at the Erwin Schr\"odinger Institute, in connection with the conference entitled ``The interaction of geometry and representation theory. Exploring new frontiers'' organized by A. Cap, A. L. Carey, A. R. Gover, C. R. Graham and J. Slovak in Vienna, Austria. During this conference, which was devoted to Mike Eastwood's $60^{\rm th}$ birthday, Mike Eastwood answered our question on how to identify the spaces $C(\Sigma_1,\Sigma_2)$ and $\bbT(\Sigma_1\times\Sigma_2)$ by means of the graph of the map $A_\phi:\bbR^2\to\bbR^2$. The identification was intuitively obvious for us, but we were lacking the proper wording that Mike gave us.

Also, during (and after) this conference, Robert Bryant, gave us many suggestions which were very useful in the final stage of writing the paper. In particular, he gave us many comments regarding Sections \ref{killi} and \ref{revo} of the paper. One of them was to extend our Theorem \ref{surfa} to Theorem \ref{consts}. 

Helpful hints and comments from Thomas Leistner and Travis Willse are also gratefully acknowledged.
 
\end{document}